\newtheorem{theorem}{Theorem}[section]
\newtheorem{lemma}[theorem]{Lemma}
\theoremstyle{definition}
\newtheorem{definition}[theorem]{Definition}
\newtheorem{example}[theorem]{Example}
\newtheorem{problem}[theorem]{Problem}
\theoremstyle{remark}
\numberwithin{equation}{section}
\begin{document}

\setcounter{page}{1}

\title[Inverse coefficient problems for the heat equation ... ]{Inverse coefficient problems for the heat equation with fractional Laplacian
}

\author[A.  Mamanazarov and  D. Suragan]{Azizbek Mamanazarov and Durvudkhan Suragan}

\address{\textcolor[rgb]{0.00,0.00,0.84}{Durvudkhan Suragan:
Department of Mathematics, 
Nazarbayev University,
Astana 010000,
Kazakhstan
}}
\email{\textcolor[rgb]{0.00,0.00,0.84}{durvudkhan.suragan@nu.edu.kz}}

\address{\textcolor[rgb]{0.00,0.00,0.84}{Azizbek Mamanazarov:
Department of Mathematical Analysis and Differential Equations, Fergana State University, Fergana 150100, Uzbekistan}}
\email{\textcolor[rgb]{0.00,0.00,0.84}{mamanazarovaz1992@gmail.com; a.mamanazarov@pf.fdu.uz}}

%\dedicatory{This paper is dedicated to Professor ABCD}
\thanks{All authors contributed equally to the manuscript and read and approved the final manuscript.}

\let\thefootnote\relax\footnote{$^{*}$Durvudkhan Suragan}

\subjclass[2010]{35R30, 35K05, 35R11}

\keywords{Inverse problem, time-dependent coefficient, fractional Laplacian, fractional heat equation, spectral problem.}

\begin{abstract} In the present paper we study inverse problems related to determining the time-dependent coefficient and unknown source function of fractional heat equations. Our approach shows that having just one set of data at an observation point ensures the existence of a weak solution for the inverse problem. Furthermore, if there is an additional datum at the observation point, it leads to a specific formula for the time-dependent source coefficient. Moreover, we investigate inverse problems involving non-local data and recovering the space-dependent source function of the fractional heat equation.     
\end{abstract} \maketitle
\tableofcontents

\section{Introduction}

Inverse problems of recovering the time-dependent coefficient and right-hand side of parabolic equations have been a point of interest for many studies, including the books \cite{1} and \cite{2}, for example. Also, vast literature is devoted to inverse problems of finding time-dependent source coefficients for heat equations. For instance, in \cite{3} several source coefficient inverse problems were considered in a cylindrical domain of $\mathbb{R}^3$ and the uniqueness and existence of the solution were established by reducing them to the system of Volterra integral equations. The existence and uniqueness results of the inverse coefficient problems and stability of the solution of inverse problems were established, for instance, in the references  \cite{5}, \cite{6},  \cite{4},  \cite{10}, \cite{11}, \cite{8}, \cite{9},    and \cite{7}.

More recently, researchers greatly focused on investigating inverse problems involving fractional operators. For instance, in \cite{26}, it was established the global uniqueness of the fractional Calderon problem with a single measurement and provided a reconstruction algorithm. Also, in \cite{27} Pu-Zhao Kow et al. considered the inverse problems of heat and wave equations involving the fractional Laplacian operator with zeroth order nonlinear perturbations and recovered nonlinear terms in the semilinear equations by employing the fractional Dirichlet-to-Neumann type map combined with the Runge approximation and the unique continuation property of the fractional Laplacian. For further discussions in this direction, we refer to \cite{30}, \cite{28}, \cite{29},  and references therein.

However, to the best of our knowledge inverse, problems identifying time and space-dependent coefficients of fractional heat equations remain open. 

The present paper aims to recover the time-dependent coefficient $p(t)$  of the following fractional Cauchy-Dirichlet problem in a bounded Lipschitz open set $\Omega\subset \mathbb{R}^n$:
\begin{equation}\label{1.1}
\begin{cases}\partial_t u(x,t)+(-\Delta)^su(x,t)=p(t)u(x,t)+f(x,t), \quad (x,t) \in \Omega \times (0,T); \\ u(x, 0)=\varphi(x), \quad x \in \Omega;\\
u(x,t)=0, \quad x\in \mathbb{R}^n
\backslash  \Omega, \quad 
t \in (0,T), 
\end{cases}    
\end{equation}
and the Cauchy problem 
\begin{eqnarray}\label{1.2}
\left\{\begin{aligned}
\partial_t u(x,t)&+(-\Delta)^su(x,t)=p(t)u(x,t)+f(x,t), \quad (x,t)\in \mathbb{R}^n \times(0,T), \\
u(x,0) & =\varphi(x), \quad x \in \mathbb{R}^n.
\end{aligned}\right.   
\end{eqnarray}
Here and after
 $(-\Delta)^s$ is the fractional Laplace operator of order $s\in (0,1)$. 

 Also, we show the existence and uniqueness of the pair of functions $(u(x,t),f(x))$ satisfying the following inverse-source problem  
\begin{eqnarray}\label{eq1.3}
\left\{\begin{aligned}
\partial_t u(x,t)&+(-\Delta)^su(x,t)=f(x), \quad (x,t) \in \Omega \times(0,T), \\
u(x, 0) & =\varphi(x), \quad x \in \Omega, \\
u(x,t)&=0, \quad x\in \mathbb{R}^n
\backslash  \Omega, \quad 
t \in (0,T), \\
u(x,T)&=\psi(x), \quad x\in \Omega.
\end{aligned}\right.   
\end{eqnarray}

 First, to find a pair $(u,p)$ in the Cauchy-Dirichlet problem \eqref{1.1}, we fix a point $q\in \Omega$ as an observation point for some time-dependent quantity and by using this we recover the time-dependent coefficient $p(t)$. It is worth noting that this method also works well to study an inverse problem for the same model but with a nonlocal additional condition. When considering the inverse problem of recovering coefficient $p(t)$ in  Cauchy problem  \eqref{1.2}, we use double data as observation points. Interestingly, this allows us to determine the unknown coefficient $p(t)$ and the solution to the problem in explicit forms. By employing the spectral theory techniques we establish the existence and uniqueness results for Problem \eqref{eq1.3}. Our techniques are partially based on the recent approach from \cite{10} in which the authors studied inverse problems for a general class of subelliptic heat equations.

We note that the fractional heat equation has applications in probability theory, more precisely from a probabilistic point of view this model represents the linear flow generated by the so-called Levy processes in stochastic
partial differential equations (see e.g. \cite{15} and \cite{16}).

The structure of our paper is as follows. Section \ref{sec2} provides basic notations, definitions, and tools for understanding and solving the problems. In Section \ref{sec3}, we use the spectral theory approach to establish the existence and uniqueness of the solution to the inverse problem \eqref{1.1} with a single datum as the observation point. In Section \ref{sec4}, we consider an inverse problem with double data and these enable us to find the time-dependent source parameter explicitly by using potential theory arguments. In Section \ref{sec5}, we apply the technique introduced in Section \ref{sec2} to study an inverse problem with a non-local datum. In Section \ref{sec6}, applying spectral theory approaches an inverse problem of identifying space-dependent source function is studied.  

\section{Preliminaries}
\label{sec2}
In this section, we give basic notations, definitions, and tools that will be used throughout the paper. 
\subsection {Fractional Sobolev spaces}\label{subsection2.1} 
Let $s\in(0,1)$ be fixed, let $\Omega$ be an open bounded subset of $\mathbb{R}^n$, $n>2s$, and let us denote by  $Q$ the set
$$Q=(\mathbb{R}^n \times\mathbb{R}^n)\backslash (\mathscr{C}\Omega\times\mathscr{C}\Omega),$$
where
$\mathscr{C}\Omega=\mathbb{R}^n\backslash \Omega$. 
The space $\mathbb{H}^s(\Omega)$ is the linear subspace of Lebegue measurable functions from $\mathbb{R}^n$ to $\mathbb{R}$ such that the restriction to $\Omega$ of any function $u$ in $\mathbb{H}^s(\Omega)$ belongs to $L^2(\Omega)$, and 
\begin{equation*}
    \text{\textit{the map}} \quad (x,y)\to (u(x)-u(y)){|x-y|^{-(n+2s)/2}} \quad \text{\textit{is in}}\quad  L^2(Q,dxdy).
\end{equation*}
 
 The norm in $\mathbb{H}^s(\Omega)$ is defined by \cite{25}
\begin{eqnarray}\label{defnorm}
    \| u\|_{\mathbb{H}^s(\Omega)}=\| u\|_{L^2(\Omega)}+\left( \int_{Q}\frac{|u(x)-u(y)|^2}{|x-y|^{n+2s}}dxdy\right)^{1/2}.
\end{eqnarray}

 The space $\mathbb{H}_0^s(\Omega)$ is defined by 
\begin{equation}
    \mathbb{H}_0^s(\Omega)=\{u\in\mathbb{H}^s(\Omega): u=0 \quad \text{a.e. in} \quad \mathscr{C}\Omega  \}
\end{equation}

 We recall some properties of the space $\mathbb{H}_0^s(\Omega)$ from \cite{25}. 

The following norm 
 \begin{equation}\label{definitionorm}
     \|u\|_{\mathbb{H}_0^s(\Omega)}=\left( \int_{\mathbb{R}^n\times\mathbb{R}^n}  \frac{|u(x)-u(y)|^2}{{|{x-y}|^{n+2s}}}dxdy \right)^{1/2}
 \end{equation}
 is equivalent to the norm defined as in \eqref{defnorm}. 
 If we take \eqref{definitionorm} as a norm on $\mathbb{H}_0^s(\Omega)$, then the following result is valid: 

 \begin{lemma}\cite{25}
     $\left(\mathbb{H}_0^s(\Omega), \|\cdot\| _{\mathbb{H}_0^s(\Omega)}\right)$ is a Hilbert space with scalar product
     \begin{equation}\label{defscalarproduct}
     \langle u,v \rangle_{\mathbb{H}_0^s({\mathbb{R}^n})}=\int_{\mathbb{R}^n\times{\mathbb{R}^n}} \frac{(u(x)-u(y))(v(x)-v(y))}{{|  {x-y} |^{n+2s}}}dxdy. \end{equation}
 \end{lemma}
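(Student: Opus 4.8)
The plan is to check first that \eqref{defscalarproduct} defines an inner product on $\mathbb{H}_0^s(\Omega)$ and then to establish completeness, the only genuine work being in the second step. Bilinearity and symmetry of $\langle\cdot,\cdot\rangle_{\mathbb{H}_0^s(\mathbb{R}^n)}$ are immediate from the formula, and $\langle u,u\rangle_{\mathbb{H}_0^s(\mathbb{R}^n)}=\|u\|_{\mathbb{H}_0^s(\Omega)}^2\ge 0$. For definiteness, suppose $\|u\|_{\mathbb{H}_0^s(\Omega)}=0$; then $u(x)=u(y)$ for a.e.\ $(x,y)\in\mathbb{R}^n\times\mathbb{R}^n$, so $u$ is a.e.\ equal to a constant $c$ on $\mathbb{R}^n$, and since $u=0$ a.e.\ on $\mathscr{C}\Omega$ while $\Omega$ is bounded so that $|\mathscr{C}\Omega|=\infty$, one must have $c=0$, i.e.\ $u=0$ in $\mathbb{H}_0^s(\Omega)$. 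This is precisely the point at which \eqref{definitionorm} is a norm on $\mathbb{H}_0^s(\Omega)$ rather than merely a seminorm; combined with the equivalence of \eqref{definitionorm} and \eqref{defnorm} recalled above, it shows $\|\cdot\|_{\mathbb{H}_0^s(\Omega)}$ is a norm induced by the bilinear form \eqref{defscalarproduct}.

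Next I would take a Cauchy sequence $(u_k)\subset\mathbb{H}_0^s(\Omega)$ for $\|\cdot\|_{\mathbb{H}_0^s(\Omega)}$. By the norm equivalence recalled above, $(u_k)$ is also Cauchy for $\|\cdot\|_{\mathbb{H}^s(\Omega)}$, hence Cauchy in $L^2(\Omega)$; as each $u_k$ vanishes a.e.\ on $\mathscr{C}\Omega$, it is Cauchy in $L^2(\mathbb{R}^n)$ and therefore converges to some $u\in L^2(\mathbb{R}^n)$ with $u=0$ a.e.\ on $\mathscr{C}\Omega$. Passing to a subsequence, $u_k\to u$ a.e.\ on $\mathbb{R}^n$, so the Gagliardo difference quotients $F_k(x,y):=\bigl(u_k(x)-u_k(y)\bigr)|x-y|^{-(n+2s)/2}$ converge a.e.\ on $\mathbb{R}^n\times\mathbb{R}^n$ to $F(x,y):=\bigl(u(x)-u(y)\bigr)|x-y|^{-(n+2s)/2}$. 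On the other hand, since $\|F_k-F_m\|_{L^2(\mathbb{R}^n\times\mathbb{R}^n)}=\|u_k-u_m\|_{\mathbb{H}_0^s(\Omega)}$, the sequence $(F_k)$ is Cauchy in $L^2(\mathbb{R}^n\times\mathbb{R}^n)$ and converges there to some $G$; extracting a further a.e.\ convergent subsequence forces $G=F$ a.e. Hence $F\in L^2(\mathbb{R}^n\times\mathbb{R}^n)$, which is exactly the statement that $u\in\mathbb{H}_0^s(\Omega)$, and $\|u_k-u\|_{\mathbb{H}_0^s(\Omega)}=\|F_k-F\|_{L^2(\mathbb{R}^n\times\mathbb{R}^n)}\to0$. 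Being an inner product space that is complete, $\mathbb{H}_0^s(\Omega)$ is thus a Hilbert space.

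The inner-product axioms are routine, the one subtlety being the use of $|\mathscr{C}\Omega|=\infty$ to kill additive constants. I expect the bookkeeping in the completeness step to be the main thing to get right: transferring the Cauchy property from the Gagliardo seminorm to $L^2(\Omega)$ via the recalled norm equivalence, extracting an a.e.\ convergent subsequence, and combining completeness of $L^2(\mathbb{R}^n\times\mathbb{R}^n)$ with a.e.\ convergence to identify the $L^2$-limit of the difference quotients as the difference quotient of the limit function — this last identification being what actually places $u$ back in $\mathbb{H}_0^s(\Omega)$. Since the statement is attributed to \cite{25}, one may alternatively simply invoke the corresponding result there; the argument above is recorded for the reader's convenience.
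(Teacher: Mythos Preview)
Your argument is correct. The paper does not supply a proof of this lemma at all; it is stated as a citation from \cite{25}, so there is no in-paper proof to compare against. Your self-contained verification of the inner-product axioms and completeness is standard and sound, and your closing remark that one may simply invoke \cite{25} matches exactly what the paper does.
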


\subsection{Regional fractional Laplacian and Weyl's asymptotic formula} 
 The regional fractional Laplacian operator is usually denoted by $(-\Delta)_{Reg}^s$. However, in the present paper, for simplicity, we use the notation $(-\Delta)^s$ for the regional fractional Laplacian. 

This operator acts on functions $u$ defined in $\Omega$ and extended by zero to $\mathbb{R}^n \backslash \Omega$. 
Let $u \in \mathbb{H}_0^s(\Omega)$ then the regional fractional Laplacian  is defined by (see for instance \cite{13}, \cite{14}):
$$
(-\Delta)^s u(x)=c(n,s) P.V.  \int_{\mathbb{R}^n} \frac{u(x)-u(y)}{|x-y|^{n+2s}} dy,
$$
where P.V. denotes the principle value and
$c(n,s)=\frac{\Gamma(\frac{n}{2}+s)}{|\Gamma(-s)|}\frac{4^s}{\pi^{n/2}},$ $\Gamma(\cdot)$ is Euler's gamma function.

Let $s\in(0,1)$ and let $\Omega$ be an open bounded subset of $\mathbb{R}^n$ with $n>2s$. Let us consider the following fractional Dirichlet-Laplacian spectral problem 
\begin{equation}\label{2.1}
\begin{cases} (-\Delta)^su=\lambda u, \quad  x \in \Omega;\\
u=0, \quad \text{in} \quad \mathbb{R}^n
\backslash  \Omega.  
\end{cases}    
\end{equation}

More precisely, we consider the weak formulation of \eqref{2.1}, which consists of the following eigenvalue problem
\begin{equation}\label{weakformulation}
\begin{cases} \int_{\mathbb{R}^n}\int_{\mathbb{R}^n}{(u(x)-u(x))(\varphi(x)-\varphi(y))}{|x-y|^{-(n+2s)}}dxdy=\lambda \int_{\Omega} u(x)\varphi(x)dx;\\
\quad \quad \forall \varphi(x) \in \mathbb{H}_0^s(\Omega), \quad u\in\mathbb{H}_0^s(\Omega).
\end{cases}    
\end{equation}

We recall that $\lambda \in \mathbb{R}$ is an eigenvalue of $(-\Delta)^s$ provided that there exists a nontrivial solution $u\in \mathbb{H}_0^s(\Omega)$ of problem \eqref{2.1} – in fact, of its weak formulation \eqref{weakformulation}-
and, in this case, any solution will be called an eigenfunction corresponding to the
eigenvalue $\lambda$.

It is known that the spectral problem \eqref{weakformulation} has a countable set of eigenvalues that can be written in increasing order according to their multiplicity 
\begin{eqnarray}\label{2.2}
    0<\lambda_{1}<\lambda_{2}\leq ...\leq \lambda_{k} ... \nearrow +\infty.
\end{eqnarray}

Also the sequences   $\{ \phi_{k}(x)\}_{k=1}^{+\infty}$ of the eigenfunctions corresponding to $\{\lambda_{k}\}_{k=1}^{+\infty}$ is an orthonormal basis in  $L^2(\Omega)$ and orthogonal basis in the space $\mathbb{H}_0^s(\Omega)$ (see \cite{25}, \cite{18}).

Moreover, we have the following Weyl's asymptotic formula  for the eigenvalues of the spectral problem \eqref{weakformulation} (see \cite{12} and \cite{24})
\begin{eqnarray}\label{2.3}
    \lambda_{k} \sim \frac{(2\pi)^{2s} k^{\frac{2s}{n}}}{(\omega_n  |\Omega|)^{\frac{2s}{n}}} \quad \text{as} \quad k\to +\infty, 
\end{eqnarray}
where 
$\omega_n=\pi^{n/2}\Gamma^{-1}(1+\frac{n}{2})$ is the volume of the unit ball in $\mathbb{R}^n$.

The following integration by-parts formula for regional fractional Laplacian will be also useful in our computations. 

\begin{theorem}\label{theorem2.1}\cite{19} Let $\Omega$ be a bounded Lipschitz open set in $\mathbb{R}^n$. If   $0<s<1,$  $u, v \in C_{0}^2({\Omega})$, then
\begin{eqnarray}\label{eq2.4}
\left(u,(-\Delta)^{s} v\right)_{L^2(\Omega)}=\left((-\Delta)^{s}u, v\right)_{L^2(\Omega)}.  
\end{eqnarray}
\end{theorem}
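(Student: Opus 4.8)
The plan is to deduce \eqref{eq2.4} from the fact that the bilinear form naturally attached to $(-\Delta)^s$ is symmetric. Writing the left-hand side from the definition,
\[
\left(u,(-\Delta)^s v\right)_{L^2(\Omega)}=c(n,s)\int_\Omega u(x)\left(\mathrm{P.V.}\int_{\mathbb{R}^n}\frac{v(x)-v(y)}{|x-y|^{n+2s}}\,dy\right)dx,
\]
the target identity is
\[
\left(u,(-\Delta)^s v\right)_{L^2(\Omega)}=\frac{c(n,s)}{2}\int_{\mathbb{R}^n}\int_{\mathbb{R}^n}\frac{(u(x)-u(y))(v(x)-v(y))}{|x-y|^{n+2s}}\,dx\,dy,
\]
whose right-hand side is visibly unchanged under $u\leftrightarrow v$; interchanging $u$ and $v$ then gives \eqref{eq2.4}. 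Here $u,v$ are taken extended by zero; since they lie in $C_0^2(\Omega)$ with support a compact subset of $\Omega$, the extension is again $C^2$ and compactly supported. If one prefers the strictly regional kernel, the extra contribution $c(n,s)\int_\Omega u(x)v(x)\big(\int_{\mathscr{C}\Omega}|x-y|^{-n-2s}\,dy\big)dx$ is itself symmetric in $u,v$, so it does not affect the argument.

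The only real issue is to run this computation through the principal value, and I would do so by truncation. For $\varepsilon>0$ set $(-\Delta)^s_\varepsilon w(x)=c(n,s)\int_{|x-y|>\varepsilon}\frac{w(x)-w(y)}{|x-y|^{n+2s}}\,dy$. A second-order Taylor expansion of $w\in C_0^2$ together with the oddness of $z\mapsto z|z|^{-n-2s}$ gives $\big|(-\Delta)^s w(x)-(-\Delta)^s_\varepsilon w(x)\big|\le C\|D^2w\|_\infty\,\varepsilon^{2-2s}$ uniformly in $x$, so $(-\Delta)^s_\varepsilon v\to(-\Delta)^s v$ uniformly on $\mathbb{R}^n$, in particular on $\operatorname{supp}u$. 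For each fixed $\varepsilon$ the kernel $\mathbf 1_{\{|x-y|>\varepsilon\}}|x-y|^{-n-2s}$ is bounded, and $u,v$ are bounded with compact support, so Fubini's theorem applies to $\int_\Omega\int_{|x-y|>\varepsilon}u(x)(v(x)-v(y))|x-y|^{-n-2s}\,dy\,dx$; performing the change of variables $x\leftrightarrow y$ on one half of the symmetrized expression yields
\[
\left(u,(-\Delta)^s_\varepsilon v\right)_{L^2(\Omega)}=\frac{c(n,s)}{2}\int\!\!\int_{\{|x-y|>\varepsilon\}}\frac{(u(x)-u(y))(v(x)-v(y))}{|x-y|^{n+2s}}\,dx\,dy.
\]

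Finally I would let $\varepsilon\to0$. On the left, uniform convergence of $(-\Delta)^s_\varepsilon v$ and compactness of $\operatorname{supp}u$ give convergence to $\left(u,(-\Delta)^s v\right)_{L^2(\Omega)}$. On the right, the global Lipschitz bounds $|u(x)-u(y)|\le C\min(1,|x-y|)$ and $|v(x)-v(y)|\le C\min(1,|x-y|)$, valid because $u,v\in C_0^2$, bound the integrand by $C|x-y|^{2-n-2s}$ near the diagonal and by $C|x-y|^{-n-2s}$ on a bounded region away from it (because of the compact supports); since $2-2s>0$ this dominating function lies in $L^1(\mathbb{R}^n\times\mathbb{R}^n)$, so dominated convergence removes the truncation and produces the symmetric right-hand side above. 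Symmetry in $u$ and $v$ then yields \eqref{eq2.4}. The step I expect to be the crux is precisely this passage from the truncated to the untruncated form: controlling the diagonal singularity so that Fubini and the symmetrization are legitimate. Both places where regularity is indispensable — the first-order cancellation giving the uniform rate $\varepsilon^{2-2s}$ and the $|x-y|^{2-n-2s}$ domination near the diagonal — work exactly in the range $s\in(0,1)$; the Lipschitz assumption on $\Omega$ is needed only so that the regional operator is well defined near $\partial\Omega$ and plays no role for test functions supported in the interior.
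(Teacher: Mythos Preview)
Your argument is correct: the truncation--symmetrization--limit strategy is the standard route to this identity, and you handle the principal value carefully. The Taylor/oddness bound giving the uniform rate $\varepsilon^{2-2s}$, the Fubini step at the truncated level, and the $L^1$ domination near and away from the diagonal (using compact support to localize one variable) are all sound in the range $0<s<1$.

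As for comparison with the paper: there is nothing to compare, because the paper does not prove this statement. Theorem~\ref{theorem2.1} is quoted verbatim from \cite{19} (Guan--Ma) as a black box, with no argument supplied; it is used only through \eqref{eq2.4} in the proof of Lemma~\ref{lemma3.4}. So your write-up is not an alternative to the paper's proof but a self-contained justification of a cited fact. If anything, your remark that the ``strictly regional'' correction term $c(n,s)\int_\Omega u(x)v(x)\big(\int_{\mathscr{C}\Omega}|x-y|^{-n-2s}\,dy\big)\,dx$ is already symmetric is a nice touch, since it makes clear the identity holds for either interpretation of the operator on $C_0^2(\Omega)$ and that the Lipschitz hypothesis on $\partial\Omega$ is not actually used for compactly supported test functions.
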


\subsection{Fourier definition of the fractional Laplacian}

\begin{definition}\cite{20}
Given $s>0$, the fractional Sobolev space is defined by
$$
\hat{H}^s\left(\mathbb{R}^n\right)=\left\{f \in L^2\left(\mathbb{R}^n\right): \int_{\mathbb{R}^n}\left(1+|\xi|^{2 s}\right)|\widehat{f}(\xi)|^2 {d} \xi<+\infty\right\},
$$
where $\widehat{f}$ denotes the Fourier transform of $f$.    
\end{definition}

We note that, the fractional Sobolev space $\hat{H}^s$ endowed with the norm
\begin{eqnarray}\label{Norm}
\|f\|_{\hat{H}^s}=\left(\int_{\mathbb{R}^n}\left(1+|\xi|^{2 s}\right)|\widehat{f}(\xi)|^2 {d} \xi\right)^{\frac{1}{2}} \quad \text { for } f \in \hat{H}^s\left(\mathbb{R}^n\right),    
\end{eqnarray}
is a Hilbert space.

\begin{definition} For $s>0,(-\Delta)^s$ denotes the fractional Laplacian defined by
\begin{eqnarray}\label{Fourierform}
(-\Delta)^s f=\mathcal{F}^{-1}\left(|\xi|^{2 s}\widehat{f}\right)   
\end{eqnarray}
for all $\xi \in \mathbb{R}^n$. \end{definition}

In other words, the fractional Laplacian $(-\Delta)^s$ can be viewed as the pseudodifferential operator with symbol $|\xi|^{2 s}$. With this definition and the Plancherel theorem, the fractional Sobolev space can be defined as:
$$
\hat{H}^s\left(\mathbb{R}^n\right)=\left\{f \in L^2\left(\mathbb{R}^n\right):(-\Delta)^{s} f \in L^2\left(\mathbb{R}^n\right)\right\},
$$
moreover, the norm
$$
\|f\|_{\hat{H}^s(\mathbb{R}^n)}=\|f\|_{L^2(\mathbb{R}^n)}+\left\|(-\Delta)^{s} f\right\|_{L^2(\mathbb{R}^n)}
$$
is equivalent to the one defined in \eqref{Norm} \cite{20}.

\subsection{Fractional heat kernel}\label{fractionalheatkernel}
The fractional heat kernel $P(x,t)$ is the fundamental solution to the fractional heat equation (see \cite{16}, \cite{17}). It is known that the integral representation of the solution of the following Cauchy problem 
\begin{equation}\label{eq2.6}
u_t(x,t)  + (-\Delta)^s u(x,t) = 0, \quad u(x,0) = u_0(x), 
\end{equation}
can be defined using the fractional heat kernel.

\begin{definition}\label{def2.4}\cite{23}
    We say that $u(x,t)$ is a strong solution of the fractional equation 
    $$u_t(x,t)  + (-\Delta)^s u(x,t) = 0, \quad (x,t) \in \mathbb{R}^n \times (0,T),$$
    if the following conditions hold:

(i) $u_t\in C(\mathbb{R}^n\times (0,T)),$

(ii) $u\in C(\mathbb{R}^n\times [0,T)),$

(iii) The equation is satisfied pointwise for every 
$(x,t) \in \mathbb{R}^n \times (0,T)$, that is,
$$u_t(x,t)+c(n,s)P.V.\int_{\mathbb{R}^n} \frac{u(x,t)-u(y,t)}{|x-y|^{n+2s}}dy=0.$$
\end{definition}

\begin{theorem}\cite{23} Let $u_0\in C(\mathbb{R}^n)\cap L^{\infty}(\mathbb{R}^n)$. Then 
$$u(x,t)=\int_{\mathbb{R}^n}P(x-y,t)u_0(y)dy$$
is a strong solution to the problem \eqref{eq2.6}, 
where $P(x,t)$ is the fractional heat kernel. 
\end{theorem}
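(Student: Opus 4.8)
The plan is to realize $u = P(\cdot,t) * u_0$ as a convolution and to exploit that the fractional heat kernel is itself a classical solution of the homogeneous equation for $t>0$. First I would recall the Fourier-side description $\widehat{P}(\xi,t)=e^{-t|\xi|^{2s}}$, from which one reads off the properties needed below: $P(\cdot,t)\in L^1(\mathbb{R}^n)$ with $\int_{\mathbb{R}^n}P(x,t)\,dx=1$, positivity, the self-similar scaling $P(x,t)=t^{-n/(2s)}P(t^{-1/(2s)}x,1)$, and --- crucially for the estimates --- that $P(\cdot,t)$, all its $x$-derivatives, and $\partial_t P(\cdot,t)$ are smooth and integrable for each fixed $t>0$, with the two-sided bound $P(x,t)\asymp t\,(t^{1/(2s)}+|x|)^{-(n+2s)}$ and analogous upper bounds on $\partial_t P$ and $(-\Delta)^s_x P$. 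Since $u_0\in L^\infty(\mathbb{R}^n)$, the integral $u(x,t)=\int_{\mathbb{R}^n}P(x-y,t)u_0(y)\,dy$ converges absolutely for every $(x,t)\in\mathbb{R}^n\times(0,T)$ and satisfies $\|u(\cdot,t)\|_{L^\infty}\le\|u_0\|_{L^\infty}$.

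Next I would verify conditions (i) and (ii). For (ii), continuity on $\mathbb{R}^n\times(0,T)$ follows from dominated convergence using the integrability bound on $P(\cdot,t)$ together with its continuity in $(x,t)$; at the boundary $t=0$ one uses that $\{P(\cdot,t)\}_{t>0}$ is an approximate identity (unit mass, concentrating at the origin by the scaling relation), so $u(x,t)\to u_0(x)$ as $t\to0^+$, and localizing near a point together with the continuity of $u_0$ there upgrades this to joint continuity on $\mathbb{R}^n\times[0,T)$. For (i), I would differentiate under the integral sign: on a slab $[t_0,t_1]\subset(0,T)$ the $t$-derivative of the integrand is dominated by an integrable majorant coming from the bound on $\partial_t P$, so $u_t(x,t)=\int_{\mathbb{R}^n}\partial_t P(x-y,t)u_0(y)\,dy$, and the same majorant yields continuity of $u_t$.

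For (iii) I would combine the above with the identity $\partial_t P(z,t)+(-\Delta)^s P(z,t)=0$, which holds pointwise for $t>0$ because $P$ is the fundamental solution. The key point is to justify
\begin{equation*}
(-\Delta)^s_x u(x,t)=\int_{\mathbb{R}^n}\big((-\Delta)^s_x P(x-y,t)\big)u_0(y)\,dy,
\end{equation*}
that is, that the principal-value singular integral defining $(-\Delta)^s$ commutes with the convolution in $y$; this is where the bulk of the work lies. Writing $(-\Delta)^s u(x,t)=c(n,s)\,\mathrm{P.V.}\int_{\mathbb{R}^n}\frac{u(x,t)-u(x+z,t)}{|z|^{n+2s}}\,dz$, I would split the $z$-integral into $|z|<\delta$ and $|z|\ge\delta$: on the near part the $C^2$-in-$x$ bounds on $P(\cdot,t)$ make the numerator $O(|z|^2)$, so the inner integral is absolutely convergent and Fubini applies; on the far part Fubini applies directly; then I let $\delta\to0$. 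Once the interchange is established, substituting $(-\Delta)^s_x P=-\partial_t P$ and invoking step (i) gives $u_t+(-\Delta)^s u=\int_{\mathbb{R}^n}\big(\partial_t P+(-\Delta)^s P\big)(x-y,t)\,u_0(y)\,dy=0$ pointwise. The main obstacle is thus the rigorous interchange of the P.V. integral with the convolution, which rests entirely on sufficiently sharp decay and smoothness estimates for $P$ and its derivatives; everything else is approximate-identity bookkeeping and dominated convergence.
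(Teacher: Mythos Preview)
The paper does not give its own proof of this statement: the theorem is quoted verbatim from \cite{23} (Barrios, Peral, Soria et al.) as a preliminary result in Subsection~\ref{fractionalheatkernel}, with no argument supplied. So there is no proof in the paper to compare your proposal against.

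That said, your outline is the standard route and is essentially correct. You correctly isolate the three ingredients: the approximate-identity behavior of $P(\cdot,t)$ for the initial condition, differentiation under the integral sign (justified by the integrable bounds on $\partial_t P$) for the time regularity, and the commutation of $(-\Delta)^s$ with convolution for the equation itself. Your near/far splitting of the principal-value integral, using the $C^2$-in-$x$ smoothness of $P(\cdot,t)$ to control the region $|z|<\delta$, is exactly how one makes this rigorous; this is also the mechanism used in \cite{23}. One small point worth tightening: to get the second-order Taylor bound $u(x,t)-u(x+z,t)=O(|z|^2)$ uniformly in the $y$-convolution you need not just that $P(\cdot,t)\in C^2$ but that $\nabla_x^2 P(\cdot,t)\in L^1(\mathbb{R}^n)$, so that the bound survives integration against $u_0\in L^\infty$. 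This follows from the scaling $P(x,t)=t^{-n/(2s)}P(t^{-1/(2s)}x,1)$ together with the smoothness and decay of $P(\cdot,1)$, but it is worth stating explicitly.
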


Now we consider the following nonhomogeneous Cauchy problem 
\begin{equation}\label{eq2.7}
u_t(x,t)  + (-\Delta)^s u(x,t) = f(x,t), \quad u(x,0) = u_0(x). 
\end{equation}

Let $u_0\in C(\mathbb{R}^n)\cap L^{\infty}(\mathbb{R}^n)$ and let $f_t(x,t),$ $(-\Delta)^sf(x,t)\in L^{\infty}(\mathbb{R}^n\times (0,T))$, $f(\cdot,t)\in H^s(\mathbb{R}^n)$   and $f$ has compact support. Then, using Duhamel's principle one can show  the strong solution to the nonhomogeneous Cauchy problem \eqref{eq2.7} is represented by 
\begin{equation}\label{strongsolution}
    \begin{aligned}
u(x,t)&=\int_{\mathbb{R}^n}P(x-y,t)u_0(y)dy+\\
     &+\int_0^t\int_{\mathbb{R}^n}P(x-y,t-s)f(y,s)dyds. 
     \end{aligned}
\end{equation}

 Let $t \in[0, T]$ and that, for every $t$, or at least for a.e. $t$, the function $u(\cdot, t)$ belongs to a separable Hilbert space $V\left(\right.$ e.g. $L^2(\Omega)$ or $\left.H^1(\Omega)\right)$.
Then, one can consider $u$ as a function of the real variable $t$ with values into $V$ :
$$
u:[0, T] \rightarrow V.
$$

We recall the definitions of certain function spaces involving time that will be utilized throughout the paper.   

The set $C([0, T]; V)$ of  continuous functions $u:[0, T] \rightarrow V$, equipped with the norm
\begin{equation}\label{2.15}
\|u(\cdot,t)\|_{C([0, T] ; V)}=\max _{0 \leq t \leq T}\|u(\cdot,t)\|_V,    
\end{equation}
forms a Banach space. 

The symbol $C^1([0, T]; V)$ denotes the Banach space of functions whose derivative exists and belongs to $C([0, T]; V)$, endowed with the norm
\begin{equation}\label{2.16}
  \|u(\cdot,t)\|_{C^1([0, T] ; V)}=\|u(\cdot,t)\|_{C([0, T] ; V)}+\|{u_t(\cdot,t)}\|_{C([0, T] ; V)}.  
\end{equation}

\section{Inverse problem with single datum}
\label{sec3}

Let $\Omega$ be a bounded Lipschitz open set in $\mathbb{R}^n$. We consider the equation 
\begin{eqnarray}\label{3.1}
    \partial_tu(x,t)+(-\Delta)^su(x,t)=p(t)u(x,t)+f(x,t), \quad (x,t) \in \Omega \times (0,T),
\end{eqnarray}
with the following Cauchy-Dirichlet conditions:
\begin{eqnarray}\label{3.2}
    u(x, 0)=\varphi(x), \quad x \in \Omega,
\end{eqnarray}
\begin{eqnarray}\label{3.3}
u(x,t)=0, \quad x\in \mathbb{R}^n
\backslash  \Omega, \quad t \in (0,T),   
\end{eqnarray}
where $(-\Delta)^s$ is the regional fractional Laplacian,  $f(x,t)$ and $\varphi(x)$ are given functions such that $\varphi(\cdot), f(\cdot,t)\in L^2(\Omega).$

Assuming $p(t)\in C[0,T]$, we introduce the definition of a weak solution of the problem $\{\eqref{3.1},\eqref{3.2}, \eqref{3.3}\}$.

\begin{definition} \label{def3.1}
  A function $u\in C([0,T]; L^2(\Omega)) \cap C((0,T];\mathbb{H}_0^s(\Omega))\cap C^1((0,T]; L^2(\Omega))$ is called a weak solution of the problem $\{\eqref{3.1},\eqref{3.2},$ $\eqref{3.3}\}$ if  it satisfies   \begin{equation}\label{3.4}
      (\partial_tu,v)_{L^2(\Omega)}+     \langle u,v \rangle_{\mathbb{H}_0^s({\mathbb{R}^n})}=(p(t)u,v)_{L^2(\Omega)}+(f,v)_{L^2(\Omega)},\; \text{for all} \; v\in \mathbb{H}_0^s(\Omega),
  \end{equation}
  and the initial condition \eqref{3.2}. 
  \end{definition}
  
We consider the following inverse problem:
\begin{problem}\label{problem3.2}
    \textit{Show the existence and uniqueness of a pair of functions $(u,p)$ such that $p\in C[0,T]$, and $u$ is a weak solution to the problem $\{\eqref{3.1},\eqref{3.2},$ $\eqref{3.3} \}$ satisfying the condition}
\begin{eqnarray}\label{3.5}
    u(q,t)=w(t), \quad t \in [0,T],  
\end{eqnarray}
\textit{where $w(t)$ is a given function and $q$ is a fixed point of $\Omega$.} 
\end{problem}

First, we prove the existence and uniqueness of the weak solution of the problem $\{\eqref{3.1},\eqref{3.2},\eqref{3.3}\}$ for  arbitrary $p(t)\in C[0,T]$.

Let $\{ \phi_{k}(x)\}_{k=1}^{+\infty}$ be eigenfunctions of the fractional Dirichlet-Laplacian spectral problem \eqref{weakformulation}. We seek a weak solution to the problem $\{\eqref{3.1},\eqref{3.2},\eqref{3.3}\}$ in the form
\begin{eqnarray}\label{3.6}
    u(x,t)=\sum\limits_{k=1}^{+\infty}u_k(t)\phi_{k}(x),
\end{eqnarray}
where $(u,\phi_k)_{L^2(\Omega)}=u_k(t)$ for $k\in \mathbb{N}.$

Substituting \eqref{3.6} into \eqref{3.4} and setting $v(x)$ as $\phi_k(x)$ in \eqref{3.4}, and using the following equalities
\begin{equation*}
    \begin{aligned}
  (u_t,\phi_k)_{L^2(\Omega)}&=\frac{d}{dt}(u,\phi_k)_{L^2(\Omega)}=u^{\prime}_{k}(t), \\ \langle u,\phi_k \rangle _{\mathbb{H}_0^s(\mathbb{R}^n)}&=\lambda_{k}(u,\phi_k)_{L^2(\Omega)}=\lambda_{k}u_k(t),      
    \end{aligned}
\end{equation*}
and introducing $(f,\phi_k)_{L^2(\Omega)}=f_k(t)$ from \eqref{3.4}, we obtain 
\begin{equation}\label{3.7}
u_k^{\prime}(t)+\lambda_{k}u_k(t)=p(t)u_k(t)+f_k(t), \quad k \in \mathbb{N}, \quad t \in(0,T).
\end{equation}
Additionally, from the initial condition \eqref{3.2}, we have
\begin{equation}\label{3.8}
u_k(0)=\varphi_k, \quad k \in \mathbb{N},
\end{equation}
where $\varphi_k=(\varphi,\phi_k)_{L^2(\Omega)}.$

It is easy to verify that the solution to the problem $\{\eqref{3.7}, \eqref{3.8}\}$ has the following form 
\begin{equation}\label{coefficient}
  u_k(t)=\varphi_k e^{-\lambda_{k}t+\int_0^tp(\tau)d\tau}+\int_0^tf_k(\tau)e^{-\lambda_{k}(t-\tau)+\int_{\tau}^{t}p(y)dy}d\tau,  \quad k\in \mathbb{N}.  
\end{equation}
Substituting the obtained expression of $u_k(t)$ into \eqref{3.6}, we get  
\begin{eqnarray}\label{3.9}    
\begin{aligned}    u(x,t)&=\sum\limits_{k=1}^{+\infty}\varphi_k e^{-\lambda_{k}t+\int_0^t p(\tau)d\tau}\phi_{k}(x) \\&+\sum\limits_{k=1}^{+\infty}\left(\int_0^tf_k(\tau)e^{-\lambda_{k}(t-\tau)+\int_{\tau}^{t}p(y)dy}d\tau\right)\phi_{k}(x). 
\end{aligned}
\end{eqnarray}

The following lemma demonstrates that the function $u(x,t)$ defined by \eqref{3.9} is a weak solution to the problem $\{\eqref{3.1},\eqref{3.2}, \eqref{3.3}\}$ for any  $p(t)\in C[0,T]$.

\begin{lemma}\label{lemma3.3}
Let $\varphi \in  {L}^2(\Omega)$, 
$f \in C([0,T];L^2(\Omega))\cap C^1((0,T];L^2(\Omega))$ and  $p(t)\in C[0,T]$. Then the function $u(x,t)$ defined by \eqref{3.9} is the unique weak solution to the problem  
$\{\eqref{3.1},
\eqref{3.2},\eqref{3.3}\}$.
\end{lemma}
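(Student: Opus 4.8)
The plan is to verify directly that the series \eqref{3.9} satisfies Definition \ref{def3.1}, which amounts to three things: convergence in the relevant function spaces, verification of the weak identity \eqref{3.4}, and verification of the initial condition \eqref{3.2}; uniqueness will then follow from the uniqueness of the ODE system $\{\eqref{3.7},\eqref{3.8}\}$. First I would record the elementary bound on the coefficients $u_k(t)$ from \eqref{coefficient}: since $p\in C[0,T]$ is bounded, say $|p|\le M$ on $[0,T]$, we have $|e^{\int_\tau^t p}|\le e^{MT}$, so $|u_k(t)|\le e^{MT}\bigl(|\varphi_k| e^{-\lambda_k t}+\int_0^t |f_k(\tau)| e^{-\lambda_k(t-\tau)}\,d\tau\bigr)$. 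The factor $e^{MT}$ is harmless, so after this reduction the estimates become exactly those for the linear fractional heat equation (the case $p\equiv 0$), which is the approach of \cite{10}.

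Next I would establish the three membership claims. For $u\in C([0,T];L^2(\Omega))$: by Parseval, $\|u(\cdot,t)\|_{L^2(\Omega)}^2=\sum_k |u_k(t)|^2$; using $|\varphi_k|^2$ summable and the $f_k$ estimate together with $\|f\|_{C([0,T];L^2)}$, one bounds the tail of the series uniformly in $t$, giving uniform convergence hence continuity; continuity at $t=0$ (where $e^{-\lambda_k t}\to 1$) needs a standard $\varepsilon/2$ argument splitting the sum at a large index $N$. For $u\in C((0,T];\mathbb{H}_0^s(\Omega))$: here $\|u(\cdot,t)\|_{\mathbb{H}_0^s}^2=\sum_k \lambda_k |u_k(t)|^2$, and for $t$ bounded away from $0$ the factor $\lambda_k e^{-2\lambda_k t}$ is bounded (indeed $\lambda_k e^{-2\lambda_k t}\le (2et)^{-1}$), which absorbs the extra $\lambda_k$; for the inhomogeneous part one estimates $\lambda_k\bigl(\int_0^t |f_k(\tau)|e^{-\lambda_k(t-\tau)}d\tau\bigr)^2$ by Cauchy–Schwarz and uses $\int_0^t \lambda_k e^{-2\lambda_k(t-\tau)}d\tau\le 1/2$. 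For $u\in C^1((0,T];L^2(\Omega))$: differentiate the series term by term using \eqref{3.7}, $u_k'(t)=-\lambda_k u_k(t)+p(t)u_k(t)+f_k(t)$, and bound $\sum_k |u_k'(t)|^2$ via the three summands — the $\lambda_k u_k(t)$ term is controlled as in the $\mathbb{H}_0^s$ estimate, the $p(t)u_k(t)$ term as in the $L^2$ estimate, and $\sum_k |f_k(t)|^2=\|f(\cdot,t)\|_{L^2}^2$; Weyl's formula \eqref{2.3} guarantees $\lambda_k\to\infty$ fast enough ($\lambda_k\sim c\,k^{2s/n}$) that these tail sums converge, and the hypothesis $f\in C^1((0,T];L^2)$ ensures the termwise-differentiated series converges uniformly on compact subsets of $(0,T]$, justifying the interchange.

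With convergence in hand, verifying \eqref{3.4} is routine: fix $v\in\mathbb{H}_0^s(\Omega)$, expand $v=\sum_k v_k\phi_k$, use that $\langle \phi_j,\phi_k\rangle_{\mathbb{H}_0^s}=\lambda_k\delta_{jk}$ and $(\phi_j,\phi_k)_{L^2}=\delta_{jk}$ (the orthonormal/orthogonal basis property cited after \eqref{2.2}), and the identity \eqref{3.4} collapses to $\sum_k v_k\bigl(u_k'(t)+\lambda_k u_k(t)-p(t)u_k(t)-f_k(t)\bigr)=0$, which holds term by term by \eqref{3.7}; the interchange of summation with the pairings is justified by the uniform convergence already proved. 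The initial condition \eqref{3.2} holds since $u_k(0)=\varphi_k$ by \eqref{coefficient} and $u(\cdot,t)\to\sum_k\varphi_k\phi_k=\varphi$ in $L^2(\Omega)$ as $t\to 0^+$ by the $C([0,T];L^2)$ continuity. Finally, for uniqueness, if $u$ is any weak solution, taking $v=\phi_k$ in \eqref{3.4} shows its Fourier coefficients $u_k(t)=(u(\cdot,t),\phi_k)_{L^2}$ solve exactly the linear initial value problem $\{\eqref{3.7},\eqref{3.8}\}$, whose solution is unique and given by \eqref{coefficient}; hence $u$ coincides with \eqref{3.9}. The main obstacle is the bookkeeping in the $C^1((0,T];L^2)$ step — one must be careful that the singularity of $\lambda_k e^{-\lambda_k t}$–type terms as $t\to 0^+$ is only a mild (polynomial) blow-up that is still locally uniform on $(0,T]$, and that the hypothesis on $f$ is exactly what is needed to differentiate the Duhamel term; everything else is a direct adaptation of the classical spectral argument.
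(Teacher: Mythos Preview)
Your treatment of the $C([0,T];L^2)$ and $C((0,T];\mathbb{H}_0^s)$ memberships, of the weak identity \eqref{3.4}, and of uniqueness is correct and follows the paper's spectral approach (in fact you are more careful than the paper about verifying \eqref{3.4} and uniqueness). The appeal to Weyl's formula \eqref{2.3} is unnecessary, though: all your bounds are uniform in $k$, and summability comes from Parseval/Bessel alone.

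There is, however, a genuine gap in your $C^1((0,T];L^2)$ step. You write $u_k'(t)=-\lambda_k u_k(t)+p(t)u_k(t)+f_k(t)$ and say the $\lambda_k u_k(t)$ contribution is ``controlled as in the $\mathbb{H}_0^s$ estimate''. But the $\mathbb{H}_0^s$ estimate bounds $\sum_k \lambda_k |u_k(t)|^2$, whereas here you need $\sum_k \lambda_k^2 |u_k(t)|^2$. For the initial-data piece this is harmless ($\lambda_k^2 e^{-2\lambda_k t}\le C/t^2$), but for the Duhamel piece the same Cauchy--Schwarz you used before gives only
\[
\lambda_k^2\Bigl(\int_0^t f_k(\tau)e^{-\lambda_k(t-\tau)}\,d\tau\Bigr)^2
\le \lambda_k^2\cdot\frac{1}{2\lambda_k}\int_0^t f_k^2(\tau)\,d\tau
=\frac{\lambda_k}{2}\int_0^t f_k^2(\tau)\,d\tau,
\]
and summing over $k$ requires $f\in L^2((0,T);\mathbb{H}_0^s(\Omega))$, which is not among the hypotheses. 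Your closing remark that the $C^1$ hypothesis on $f$ ``ensures the termwise-differentiated series converges'' is exactly the point that needs an argument, and nothing in your sketch supplies one.

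The paper's fix is to integrate the Duhamel term by parts in $\tau$ \emph{before} estimating: since $\partial_\tau\bigl(e^{-\lambda_k(t-\tau)+\int_\tau^t p}\bigr)=(\lambda_k-p(\tau))e^{-\lambda_k(t-\tau)+\int_\tau^t p}$, the troublesome factor $\lambda_k$ in $u_k'(t)$ is traded for a boundary term $f_k(0)e^{-\lambda_k t+\int_0^t p}$ and an integral involving $f_k'(\tau)$ (see \eqref{3.23}). All three resulting pieces are then summable using only $f\in C([0,T];L^2)\cap C^1((0,T];L^2)$. This integration by parts is precisely where---and the only place where---the time-differentiability hypothesis on $f$ is used; without it the estimate does not close.
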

\begin{proof}

First, we  prove that the function $u(x,t)$ given by \eqref{3.9} belongs to the class of functions $C([0,T];L^2(\Omega))$. 

Since  $\{ \phi_{k}(x)\}_{k=1}^{+\infty}$ is an orthonormal basis in  $L^2(\Omega)$, from \eqref{3.9} we have 
\begin{equation}\label{eq3.10}
    \|u(x,t)\|^2_{L^2(\Omega)}= \sum\limits_{k=1}^{+\infty}\left\{ \varphi_k e^{-\lambda_{k}t+\int_0^t p(\tau)d\tau}+ \int_0^t  f_k(\tau)e^{-\lambda_{k}(t-\tau)+\int_{\tau}^{t}p(y)dy}d\tau \right\}^2.
\end{equation}
Hence, applying the inequality  $(a+b)^2\leq 2(a^2+b^2)$ to \eqref{eq3.10}, we obtain
\begin{eqnarray}\label{eq3.11}
\begin{aligned}    \|u(x,t)\|^2_{L^2(\Omega)} &\leq 2\sum\limits_{k=1}^{+\infty}\varphi_k^2e^{-2\lambda_{k}t+2\int_0^t p(\tau)d\tau}\\ & +  2\sum\limits_{k=1}^{+\infty}\left(\int_0^tf_k(\tau)e^{-\lambda_{k}(t-\tau)+\int_{\tau}^{t}p(y)dy}d\tau\right)^2. 
\end{aligned}
\end{eqnarray}

We show the convergence of the series on the right-hand side of  \eqref{eq3.11}. Taking $p(t)\in C[0,T]$ into account and using Parseval's identity, we have 
$$
\sum\limits_{k=1}^{+\infty}\varphi_k^2e^{-2\lambda_{k}t+2\int_0^t p(\tau)d\tau}\leq C_1 \sum\limits_{k=1}^{+\infty}\varphi_k^2=C_1 \|\varphi \|^2_{L^2(\Omega)},
$$
where $C_1$ is a constant not dependent on $k$. 

Applying the Cauchy-Schwarz inequality, it is straightforward to see 
\begin{equation} \label{3.12}
\begin{aligned}
\left\{\int_0^tf_k(\tau)e^{-\lambda_{k}(t-\tau) +\int_{\tau}^{t}p(y)dy}d\tau\right\}^2 &\leq \int_{0}^te^{-2\lambda_{k}(t-\tau) +2\int_{\tau}^{t}p(y)dy}d\tau \int_0^tf_k^2(\tau)d\tau
\\ 
& \leq C_2 \int_0^t f_k^2(\tau)d\tau.
    \end{aligned}
\end{equation}
Since $f\in C([0,T];L^2(\Omega))$, using   \eqref{3.12}, we obtain  
\begin{equation}\label{3.13}
    \begin{aligned}
        \sum\limits_{k=1}^{+\infty}\left\{\int_0^tf_k(\tau)e^{-\lambda_{k}(t-\tau) +\int_{\tau}^{t}p(y)dy}d\tau\right\}^2\leq C_2 \sum\limits_{k=1}^{+\infty}\int_0^t  f_k^2(\tau)d\tau\\ \leq C_2\|f\|_{C([0,T];L^2(\Omega)}.
    \end{aligned}
\end{equation}

Finally, summarizing the previous facts, we have 
\begin{equation}\label{continuousnorm}
  \max_{0\leq t \leq T}\|u(x,t)\|^2_{L^2(\Omega)}\leq 2  C_1 \|\varphi \|^2_{L^2(\Omega)}+2C_2\|f\|_{C([0,T];L^2(\Omega)},  
\end{equation}
from which it follows that the function $u(x, t)$ belongs to the class $C([0,T];L^2(\Omega))$ under the assumptions of Lemma  \ref{lemma3.3}.

Now, we  prove that the function defined by \eqref{3.9} belongs to the class $C((0,T];$ $\mathbb{H}_0^s(\Omega))$. To this end, we show that the series \eqref{3.9} converges uniformly in  $\mathbb{H}_0^s(\Omega)$ for any $t \in [t_0, T]$, where $0<t_0<T$. 

Rewrite \eqref{3.9} in the form 
\begin{equation}\label{3.14}
    \begin{aligned}
        u(x,t)=&\sum\limits_{k=1}^{+\infty}\sqrt{\lambda_{k}} u_k(t)\phi_k(x)/\sqrt{\lambda_k}, 
    \end{aligned}
\end{equation}
where $u_k(t)$ is defined by formula \eqref{coefficient}.

Since  $\{{\phi_k(x)/\sqrt{\lambda_{k}}}\}_{k=1}^{+\infty}$ is an orthonormal system of  $\mathbb{H}_0^s(\Omega)$,  we have 
\begin{equation}\label{eq3.16}    \|u(x,t)\|^2_{\mathbb{H}_0^s(\Omega)}=  \sum\limits_{k=1}^{+\infty} \lambda_k\left\{ \varphi_k e^{-\lambda_{k}t+\int_0^t p(\tau)d\tau}+ \int_0^t  f_k(\tau)e^{-\lambda_{k}(t-\tau)+\int_{\tau}^{t}p(y)dy}d\tau \right\}^2.
\end{equation}

Hence, applying $(a+b)^2\leq 2 a^2+2b^2$, we obtain 
\begin{equation}\label{eq3.17}
    \begin{aligned}
        \|u(x,t)\|^2_{\mathbb{H}_0^s(\Omega)} \leq &  2 \sum\limits_{k=1}^{+\infty} \left( \sqrt{\lambda_k} \varphi_k e^{-\lambda_{k}t+\int_0^t p(\tau)d\tau}\right)^2       \\+ & 2\sum\limits_{k=1}^{+\infty} \lambda_k\left(\int_0^t  f_k(\tau)e^{-\lambda_{k}(t-\tau)+\int_{\tau}^{t}p(y)dy}d\tau\right)^2.
    \end{aligned}
\end{equation}

Hence, 
we need to show the convergence of each series on the right-hand side of \eqref{eq3.17}. 

Let us consider the first series on the right-hand side of \eqref{eq3.17}. Since $p(t)\in C[0,T]$, the following estimates hold for any $t\in [t_0,T]$:
\begin{equation}\label{eq3.18}
    \begin{aligned}
  \sqrt{\lambda_{k}}e^{-\lambda_{k}t+ \int_{0}^t p(\tau)d\tau}&\leq M \sqrt{\lambda_{k}}e^{-\lambda_{k}t} =\frac{M}{t\sqrt{\lambda_{k}}}\lambda_{k}t e^{-\lambda_{k}t}\\ &\leq        \frac{M}{t\sqrt{\lambda_{k}}} \max ze^{-z}\leq \frac{M}{t_0e\sqrt{\lambda_{1}}}< +\infty,
    \end{aligned}
\end{equation}
where $M=\|e^{\int_0^tp(\tau)d\tau}\|_{C[0,T]}.$

Then, taking \eqref{eq3.18} into account, and applying Bessel's inequality, we have  
\begin{equation}\label{eq3.19}
    \sum\limits_{k=1}^{+\infty} \left( \sqrt{\lambda_k} \varphi_k e^{-\lambda_{k}t+\int_0^t p(\tau)d\tau}\right)^2 \leq C_3 \sum\limits_{k=1}^{+\infty} \varphi_k^2\leq C_3 \|\varphi\|_{L^2(\Omega)},
\end{equation}
where $C_3=\frac{M^2}{t_0^2e^2\lambda_1}$.

Now, by applying the Cauchy-Schwarz inequality, we get 
\begin{equation*} 
\begin{aligned}
\lambda_k\left\{\int_0^tf_k(\tau)e^{-\lambda_{k}(t-\tau) +\int_{\tau}^{t}p(y)dy}d\tau\right\}^2 &\leq \lambda_k\int_{0}^te^{-2\lambda_{k}(t-\tau) +2\int_{\tau}^{t}p(y)dy}d\tau \int_0^tf_k^2(\tau)d\tau
\\ 
& \leq \lambda_k M^2 \int_{0}^te^{-2\lambda_{k}(t-\tau) }d\tau \int_0^tf_k^2(\tau)d\tau\\ &
    =\lambda_k M^2 \frac{1-e^{-2\lambda_kt}}{2\lambda_k} \int_0^t f_k^2(\tau)d\tau \\ 
& \leq \lambda_k M^2 \frac{1}{2\lambda_k} \int_0^t f_k^2(\tau)d\tau \leq C_4\int_0^t f_k^2(\tau)d\tau,
    \end{aligned}
\end{equation*}
where $C_4=M^2.$

Taking the last estimate into account, we obtain 
\begin{equation}\label{eq3.20}
    \begin{aligned}
    \sum\limits_{k=1}^{+\infty} 
    \lambda_k\left\{\int_0^tf_k(\tau)e^{-\lambda_{k}(t-\tau) +\int_{\tau}^{t}p(y)dy}d\tau\right\}^2
   & \leq C_4 \sum\limits_{k=1}^{+\infty} \int_0^t f_k^2(\tau)d\tau \\ & \leq C_4\|f\|_{C((0,T];L^2(\Omega))}.        \end{aligned}
\end{equation}

From \eqref{eq3.19} and \eqref{eq3.20} it follows that 
\begin{equation}    \begin{aligned}
\label{3.16}
 \max _{t \in (0, T]}\|u(x,t) \|_{ \mathbb{H}_0^s(\Omega)}^2   
\leq  2C_3 \|\varphi\|_{L^2(\Omega)}^2+ 2C_4\|f\|_{C((0,T],L^2(\Omega))}^2.
\end{aligned}
\end{equation}

Thus, the function $u(x, t)$ belongs to  $C((0,T];\mathbb{H}_0^2(\Omega))$ under the assumptions of Lemma  \ref{lemma3.3}.

Now, we demonstrate the uniform convergence of the series obtained from \eqref{3.9} by differentiating with respect to $t$ in  $L^2(\Omega)$ for any interval $[t_0, T]$, where $0 < t_0 < T$.

By differentiating the series \eqref{3.9} with respect to $t$, we obtain
$$
\begin{aligned}
u_t(x, t) & =\sum_{k=1}^{+\infty}\left\{\left(-\lambda_k+p(t)\right) \varphi_k e^{-\lambda_k t+\int_0^t p(\tau) d \tau}+f_k(t)+\right. \\
& \left.+\int_0^t f_k(\tau)\left(-\lambda_k+p(t)\right) e^{-\lambda_k(t-\tau)+\int_\tau^t p(y) d y} d \tau\right\} \phi_k(x).
\end{aligned}
$$

Hence,  integrating by parts gives
\begin{equation}\label{3.23}
\begin{aligned}
u_t(x, t)= & \sum_{k=1}^{+\infty}\left\{\left(-\lambda_k+p(t)\right) \varphi_k e^{-\lambda_k t+\int_0^t p(\tau) d \tau}+f_k(0) e^{-\lambda_k t+\int_0^t p(y) d y}+\right. \\
& \left.+\int_0^t f_k^{\prime}(\tau) e^{-\lambda_k(t-\tau)+\int_\tau^t p(y) d y} d \tau\right\} \phi_k(x).
\end{aligned} \end{equation}

Since the system $\{{\phi_k(x)}\}_{k=1}^{+\infty}$ is orthonormal in $L^2(\Omega)$, taking $L^2$-norm of both sides of  \eqref{3.23}, we have 
\begin{equation}\label{eq3.23}
    \begin{aligned}
        \|u_t(x,t)\|^2_{L^2(\Omega)} & =\sum\limits_{k=1}^{+\infty} \left\{ \left(-\lambda_k+p(t)\right) \varphi_k e^{-\lambda_k t+\int_0^t p(\tau) d \tau} \right. \\ & + f_k(0) e^{-\lambda_k t+\int_0^t p(y) d y} +\left.\int_0^t f_k^{\prime}(\tau) e^{-\lambda_k(t-\tau)+\int_\tau^t p(y) d y} d \tau\right\}^2.
    \end{aligned}
\end{equation}

Thus, applying the inequality $(a+b+c)^2\leq 3(a^2+b^2+c^2)$ to \eqref{eq3.23}, we get 
\begin{equation}\label{eq3.24}
  \begin{aligned}
\|u_t(x,t)\|^2_{L^2(\Omega)} & \leq 3\sum\limits_{k=1}^{+\infty} \left(-\lambda_k+p(t)\right)^2 \varphi^2_k e^{-2\lambda_k t+2\int_0^t p(\tau) d \tau} 
\\ 
& + 3 \sum\limits_{k=1}^{+\infty}f_k^2(0) e^{-2\lambda_kt+2\int_0^t p(y)dy}
\\ & +3\sum\limits_{k=1}^{+\infty}\left\{\int_0^t f_k^{\prime}(\tau) e^{-\lambda_k(t-\tau)+\int_\tau^t p(y) d y} d \tau\right\}^2.
\end{aligned}
\end{equation}

Now we show convergence of both series on the right-hand side of \eqref{eq3.24}.

By applying the inequality $(a+b)^2\leq 2a^2+2b^2$ to the first series on the right-hand side of \eqref{eq3.24}, we have  
\begin{equation}\label{eq3.25}
    \begin{aligned}
 \sum\limits_{k=1}^{+\infty} \left(-\lambda_k+p(t)\right)^2  & \varphi^2_k e^{-2\lambda_k t+2\int_0^t p(\tau) d \tau}   \\ & \leq 2  \sum\limits_{k=1}^{+\infty} \lambda^2_k\varphi^2_k e^{-2\lambda_k t+2\int_0^t p(\tau) d \tau} \\ & +  2 \sum\limits_{k=1}^{+\infty} p^2(t)\varphi^2_k e^{-2\lambda_k t+2\int_0^t p(\tau) d \tau}.      
    \end{aligned}
\end{equation}

From \eqref{eq3.18}, for all $t\in[t_0,T]$, we have  
$$\lambda^2_k e^{-2\lambda_kt+2\int_0^tp(\tau)d\tau}\leq \frac{M^2}{t_0^2e^2}.$$

Taking the last estimate into account, from \eqref{eq3.25}, we obtain 
\begin{equation}
    \begin{aligned}
        \sum\limits_{k=1}^{+\infty} \left(-\lambda_k+p(t)\right)^2  & \varphi^2_k e^{-2\lambda_k t+2\int_0^t p(\tau) d \tau}   \\ & \leq 2\frac{M^2}{t_0^2e^2}  \sum\limits_{k=1}^{+\infty} \varphi^2_k +  2 M^2\|p(t)\|^2_{C[0,T]} \sum\limits_{k=1}^{+\infty} \varphi^2_k \\  & = 2M^2\left(\frac{1}{t_0^2e^2} + \|p(t)\|^2_{C[0,T]}\right) \sum\limits_{k=1}^{+\infty} \varphi^2_k \leq C_5 \|\varphi\|^2_{L^2(\Omega)}, 
    \end{aligned}
\end{equation}
where $C_5=2M^2\left(\frac{1}{t_0^2e^2} + \|p(t)\|^2_{C[0,T]}\right)$.

We also have  
$$
\sum\limits_{k=1}^{+\infty}f_k^2(0) e^{-2\lambda_kt+2\int_0^t p(y)dy}\leq M^2  \sum\limits_{k=1}^{+\infty}f_k^2(0) \leq C_6 \|f\|_{L^2(\Omega)},
$$
where $C_6=M^2.$

Applying the Cauchy-Schwarz inequality to the third series on the right-hand side of \eqref{eq3.24}, we compute 
\begin{equation*} 
\begin{aligned}
\left\{\int_0^tf^{\prime}_k(\tau)e^{-\lambda_{k}(t-\tau) +\int_{\tau}^{t}p(y)dy}d\tau\right\}^2 & \leq \int_{0}^te^{-2\lambda_{k}(t-\tau) +2\int_{\tau}^{t}p(y)dy}d\tau \int_0^t f^{\prime 2}_k(\tau)d\tau
\\ 
& \leq  M^2 \int_{0}^te^{-2\lambda_{k}(t-\tau) }d\tau \int_0^tf^{\prime 2}_k(\tau)d\tau\\ &
    =M^2 \frac{1-e^{-2\lambda_kt}}{2\lambda_k} \int_0^t f_k^2(\tau)d\tau \\ &  \leq \frac{M^2}{2\lambda_k}\int_0^t f^{\prime 2}_k(\tau)d\tau \leq \frac{M^2}{2\lambda_1}\int_0^t f^{\prime 2}_k(\tau)d\tau \\ & \leq 
    C_7 \int_0^t f^{\prime 2}_k(\tau)d\tau, 
    \end{aligned}
\end{equation*}
where $C_7=M^2/(2\lambda_1).$

Then, for the convergence of the third series on the right-hand side of \eqref{eq3.24}, we have the following estimate
\begin{equation}
    \begin{aligned}
  \sum\limits_{k=1}^{+\infty}\left\{\int_0^t f_k^{\prime}(\tau) e^{-\lambda_k(t-\tau)+\int_\tau^t p(y) d y} d \tau\right\}^2 & \leq C_7 \sum\limits_{k=1}^{+\infty}\int_0^t f^{\prime 2}_k(\tau)d\tau \\ & \leq  C_7 \|f\|_{C^1((0,T];L^2(\Omega))}.         \end{aligned}
\end{equation}

Thus, we have arrived at
\begin{equation}\label{derivativenorm}
  \max_{t\in (0,T]}\|u_t(x,t)\|^2_{L^2(\Omega)}\leq 6 C_5 \|\varphi\|^2_{L^2(\Omega)}+ 3 C_6 \|f\|_{L^2(\Omega)}+3 C_7 \|f\|_{C^1((0,T];L^2(\Omega))}.  
\end{equation}

From \eqref{continuousnorm} and \eqref{derivativenorm}, it follows that $u(x,t) \in C^1((0,T];L^2(\Omega))$.  

The proof of Lemma \ref{lemma3.3} is complete.
\end{proof}

Now we introduce the following set of functions
\begin{equation*}
D_m(\Omega)=\begin{cases}
    \varphi : \quad \varphi \in C_{0}^{2 }({\Omega}),  \; ((-\Delta)^s\varphi)^l \in C_{0}^{2}({\Omega}),\; l=0, \ldots, m-1;\\ 
\text{and} \quad ((-\Delta)^s\varphi)^m \in L^{2}({\Omega}), 
\end{cases}    
\end{equation*}
for some natural number $m$ such that $m>\frac{n}{4s}+1$,  and prove the following lemma:

\begin{lemma}\label{lemma3.4} Let $\varphi \in D_m(\Omega)$. Then we have
$$
\sum_{k=1}^{+\infty} \lambda_{k}\left|\varphi_k\right| \leq {\Lambda}^{\frac{1}{2}}\left\|((-\Delta)^s)^m \varphi\right\|_{L_2(\Omega)},
$$
where $\Lambda=\sum\limits_{k=1}^{+\infty} \frac{1}{\lambda_{k}^{2(m-1)}}$.    
\end{lemma}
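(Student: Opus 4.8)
The plan is to show that, up to a single power of $\lambda_k$, the Fourier coefficient $\varphi_k=(\varphi,\phi_k)_{L^2(\Omega)}$ coincides with the $k$-th Fourier coefficient of $((-\Delta)^s)^m\varphi$, and then to conclude by the Cauchy--Schwarz inequality for series together with Bessel's inequality; the finiteness of $\Lambda$ will come from Weyl's asymptotics \eqref{2.3}.

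\textbf{Step 1 (reduction identity).} Since each eigenfunction $\phi_k$ satisfies $(-\Delta)^s\phi_k=\lambda_k\phi_k$ in the weak sense \eqref{weakformulation}, and since $\varphi\in D_m(\Omega)$ guarantees that $\varphi$ and all the iterates $((-\Delta)^s)^l\varphi$ with $l=0,\dots,m-1$ belong to $C_0^2(\Omega)$, I would transfer the operator $(-\Delta)^s$ from $\varphi$ onto $\phi_k$ one step at a time, using the integration-by-parts identity \eqref{eq2.4} of Theorem \ref{theorem2.1} (equivalently, testing \eqref{weakformulation} against $((-\Delta)^s)^l\varphi$). An induction on $l$ then gives
$$\varphi_k=(\varphi,\phi_k)_{L^2(\Omega)}=\frac{1}{\lambda_k^{\,l}}\left(((-\Delta)^s)^l\varphi,\phi_k\right)_{L^2(\Omega)},\qquad l=0,1,\dots,m,$$
and in particular, taking $l=m$ and multiplying by $\lambda_k$,
$$\lambda_k\,\varphi_k=\frac{1}{\lambda_k^{\,m-1}}\left(((-\Delta)^s)^m\varphi,\phi_k\right)_{L^2(\Omega)},\qquad k\in\mathbb{N}.$$
Note that the last transfer only requires $((-\Delta)^s)^{m-1}\varphi\in C_0^2(\Omega)$, so that $((-\Delta)^s)^m\varphi\in L^2(\Omega)$ enters merely as an $L^2$ factor paired with $\phi_k$.

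\textbf{Step 2 (summation and finiteness of $\Lambda$).} Taking absolute values in the identity above and summing over $k$, the Cauchy--Schwarz inequality for series yields
$$\sum_{k=1}^{+\infty}\lambda_k\,|\varphi_k|\le\left(\sum_{k=1}^{+\infty}\frac{1}{\lambda_k^{\,2(m-1)}}\right)^{1/2}\left(\sum_{k=1}^{+\infty}\bigl|\left(((-\Delta)^s)^m\varphi,\phi_k\right)_{L^2(\Omega)}\bigr|^2\right)^{1/2}.$$
Because $\{\phi_k\}_{k=1}^{+\infty}$ is an orthonormal basis of $L^2(\Omega)$, Bessel's inequality (in fact Parseval's identity) bounds the second factor by $\|((-\Delta)^s)^m\varphi\|_{L^2(\Omega)}$, which is exactly the claimed estimate with $\Lambda=\sum_{k=1}^{+\infty}\lambda_k^{-2(m-1)}$. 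Finally, by Weyl's asymptotic formula \eqref{2.3} one has $\lambda_k\sim c\,k^{2s/n}$ with $c>0$, so $\lambda_k^{-2(m-1)}\asymp k^{-4s(m-1)/n}$; since the hypothesis $m>\frac{n}{4s}+1$ is equivalent to $\frac{4s(m-1)}{n}>1$, the series defining $\Lambda$ converges, so the right-hand side is finite.

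The step I expect to be the main obstacle is the rigorous justification of the iterated integration by parts in Step 1: the identity \eqref{eq2.4} is stated for functions in $C_0^2(\Omega)$, whereas the eigenfunctions $\phi_k$ need not belong to $C_0^2(\Omega)$. The clean way around this is to avoid applying \eqref{eq2.4} to the pair $\bigl(((-\Delta)^s)^l\varphi,\phi_k\bigr)$ directly, and instead to test the weak eigenvalue equation \eqref{weakformulation} with the admissible test function $v=((-\Delta)^s)^l\varphi\in C_0^2(\Omega)\subset\mathbb{H}_0^s(\Omega)$, using that for $w\in C_0^2(\Omega)$ one has $\langle w,\phi_k\rangle_{\mathbb{H}_0^s(\mathbb{R}^n)}=\bigl((-\Delta)^sw,\phi_k\bigr)_{L^2(\Omega)}$; this identifies $\lambda_k(w,\phi_k)_{L^2(\Omega)}$ with $\bigl((-\Delta)^sw,\phi_k\bigr)_{L^2(\Omega)}=\bigl(((-\Delta)^s)^{l+1}\varphi,\phi_k\bigr)_{L^2(\Omega)}$ and drives the induction without ever needing $\phi_k$ to be smooth up to the boundary. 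Everything else is a direct application of Cauchy--Schwarz, Bessel's inequality, and the spectral asymptotics recorded in Section \ref{sec2}.
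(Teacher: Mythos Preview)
Your proposal is correct and follows essentially the same route as the paper: transfer $(-\Delta)^s$ from $\varphi$ to $\phi_k$ $m$ times to get $\lambda_k|\varphi_k|=\lambda_k^{-(m-1)}\bigl|(((-\Delta)^s)^m\varphi,\phi_k)_{L^2(\Omega)}\bigr|$, then apply Cauchy--Schwarz, Bessel, and Weyl. If anything, you are more careful than the paper, which invokes \eqref{eq2.4} directly for the pair $(\varphi,\phi_k)$ without addressing that $\phi_k$ is only known to lie in $\mathbb{H}_0^s(\Omega)$; your suggestion to run the induction through the weak eigenvalue identity \eqref{weakformulation} with test function $((-\Delta)^s)^l\varphi$ is the cleaner justification of the same step.
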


\begin{proof}
By applying \eqref{eq2.4} $m$ times, one obtains 
\begin{eqnarray}\label{eq3.30}
(\varphi, \phi_k)_{L^2(\Omega)}=(-1)^m \frac{1}{\lambda_{k}^m}\left(((-\Delta)^s)^m\varphi, \phi_k \right)_{L^2(\Omega)}.  
\end{eqnarray}
Then, using the Cauchy-Schwartz and Bessel inequalities, we have
$$
\begin{aligned}
\sum_{k=1}^{+\infty} \lambda_{k}\left|\varphi_k\right|& =\sum_{k=1}^{+\infty} \frac{1}{\lambda_{k}^{m-1}}\left|\left(((-\Delta)^s)^m\varphi, \phi_k \right)_{L^2(\Omega)}\right| 
\\
& \leq \left(\sum_{k=1}^{+\infty} \frac{1}{\lambda_{k}^{2(m-1)}}\right)^{\frac{1}{2}}\left(\sum_{k=1}^{+\infty}\left|\left(((-\Delta)^s)^m\varphi, \phi_k \right)_{L^2(\Omega)}\right|^2\right)^{\frac{1}{2}}\\ & \leq {\Lambda}^{\frac{1}{2}}\left\|((-\Delta)^s)^m\varphi \right\|_{L_2(\Omega)}.
\end{aligned}
$$

Since $m>\frac{{n}}{4s}+1$,  the Weyl's asymptotic formula \eqref{2.3} implies  convergence of the series $\Lambda=\sum_{k=1}^{\infty} \frac{1}{\lambda_{k}^{2(m-1)}}$.
\end{proof}

 We choose a point $q \in \Omega$ such that $\phi_k(q)$, for all $k = 1, 2, \ldots,$ is bounded and $\phi_k(q) \neq 0$ for some $n_0 \in \mathbb{N}$. We denote by $\mathbb{N}_q$ the maximal subset of $\mathbb{N}$ such that $\phi_k(q) \neq 0$ for all $k \in \mathbb{N}_q$. We use this point as the observation point for the quantity in \eqref{3.5}.

We have the following assumptions about the given functions:

(i) $\varphi \in D_m(\Omega)$ with $\varphi_k \phi_{k}(q) \geq 0$ for $\forall k \in \mathbb{N}_q$ and $\varphi_{n_0} \phi_{n_0}(q) \ne 0$ for some $n_0 \in \mathbb{N}_q$;

(ii) $f \in C(\Omega \times[0, T])$ and $f(x, t) \in D_m(\Omega)$ with $f_k(t) \phi_{k}(q) \ne 0$ for $\forall t \in[0, T]$ and for $\forall k \in \mathbb{N}_q$;

(iii) $w \in C[0, T]$ with $w(t) \neq 0$ for $\forall t \in[0, T]$ and $w(0)=\varphi(q)$.

The existence and uniqueness of the solution to the inverse problem Problem \ref{problem3.2} is given by the following theorem:
\begin{theorem}\label{thrm3.4}  
Suppose that assumptions (i)-(iii) and the assumptions of Lemmas \ref{lemma3.3} and \ref{lemma3.4} hold. Then Problem \ref{problem3.2} has a unique solution pair $(u, p)$.
\end{theorem}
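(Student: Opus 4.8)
The strategy is to convert the single observation condition \eqref{3.5} into a linear Volterra integral equation of the second kind for the auxiliary function $\mu(t):=\exp\bigl(-\int_0^t p(\tau)\,d\tau\bigr)$, to solve that equation by successive approximations, and finally to recover $p$ by logarithmic differentiation. First I would evaluate the series representation \eqref{3.9} of the weak solution at the observation point $x=q$. By assumptions (i)--(ii) the functions $\varphi$ and $f(\cdot,t)$ belong to $D_m(\Omega)$, so Lemma \ref{lemma3.4}, the boundedness of $\{\phi_k(q)\}$, and the bound $e^{-\lambda_k t}\le 1$ force the absolute and uniform convergence on $[0,T]$ of
\[
a(t):=\sum_{k=1}^{+\infty}\varphi_k\phi_k(q)\,e^{-\lambda_k t},\qquad
K(t,\tau):=\sum_{k=1}^{+\infty}f_k(\tau)\phi_k(q)\,e^{-\lambda_k(t-\tau)},
\]
together with the series obtained by differentiating in $t$; thus $a\in C^1[0,T]$ and $K$ is continuous and $C^1$ in $t$ on $\{0\le\tau\le t\le T\}$. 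Since $e^{-\lambda_k(t-\tau)+\int_\tau^t p}=\mu(\tau)/\mu(t)$, the condition $u(q,t)=w(t)$ is then equivalent to
\[
\mu(t)\,w(t)=a(t)+\int_0^t K(t,\tau)\,\mu(\tau)\,d\tau,\qquad t\in[0,T].
\]

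Next, because $w\in C[0,T]$ and $w(t)\ne 0$ we have $\inf_{[0,T]}|w|>0$, so dividing by $w(t)$ produces a genuine linear Volterra integral equation of the second kind for $\mu$, with continuous kernel $K(t,\tau)/w(t)$ and continuous free term $a(t)/w(t)$; a standard Picard/Neumann-series argument gives a unique solution $\mu\in C[0,T]$. Evaluating the integral identity at $t=0$ and using assumption (iii) together with $\varphi(q)=\sum_k\varphi_k\phi_k(q)>0$ (positivity from the sign condition in (i)) yields $\mu(0)=a(0)/w(0)=\varphi(q)/\varphi(q)=1$. If $\mu$ vanished somewhere, then at its first zero $t_*$ the identity would read $0=a(t_*)+\int_0^{t_*}K(t_*,\tau)\mu(\tau)\,d\tau$, which is impossible: $a(t_*)>0$, while the sign hypotheses in (i)--(ii) make $K(t_*,\cdot)\ge 0$ and $\mu\ge 0$ on $[0,t_*)$. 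Hence $\mu>0$ throughout $[0,T]$.

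Since $a$ and $K$ are $C^1$ in $t$, the right-hand side $t\mapsto a(t)+\int_0^tK(t,\tau)\mu(\tau)\,d\tau$ is $C^1$, so $\mu$ (being this function divided by $w$) lies in $C^1[0,T]$, and I may set
\[
p(t):=-\frac{\mu'(t)}{\mu(t)}\in C[0,T],\qquad\text{so that}\qquad \int_0^t p(\tau)\,d\tau=-\log\mu(t).
\]
With this $p$, the function $u$ defined by \eqref{3.9} is, by Lemma \ref{lemma3.3}, the unique weak solution of $\{\eqref{3.1},\eqref{3.2},\eqref{3.3}\}$; substituting $\mu=e^{-\int_0^t p}$ back into the Volterra identity gives $u(q,t)=e^{\int_0^t p}\bigl(a(t)+\int_0^t e^{-\int_0^\tau p}K(t,\tau)\,d\tau\bigr)=e^{\int_0^t p}\mu(t)w(t)=w(t)$, which proves existence. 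For uniqueness, if $(u_1,p_1)$ and $(u_2,p_2)$ are two solution pairs, then by Lemma \ref{lemma3.3} each $u_i$ coincides with \eqref{3.9} for $p=p_i$, and \eqref{3.5} forces $\mu_i=e^{-\int_0^t p_i}$ to solve the very same Volterra equation; its uniqueness gives $\mu_1\equiv\mu_2$, hence $p_1\equiv p_2$ and then $u_1\equiv u_2$.

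The delicate point is the first step: the termwise evaluation at $x=q$ must be justified by uniform convergence — on the whole interval $[0,T]$, not merely on subintervals $[t_0,T]$ as in Lemma \ref{lemma3.3} — of the series defining $a$, $K$, and their time derivatives, and this is exactly where the space $D_m(\Omega)$, Lemma \ref{lemma3.4}, and the Weyl asymptotics \eqref{2.3} are indispensable, together with the sign conditions of (i)--(ii), which are needed both to prevent cancellation in these series and to secure the positivity of $\mu$ used when passing to $\log\mu$.
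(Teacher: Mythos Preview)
Your argument is essentially the paper's own: introduce $r(t)=e^{-\int_0^t p}$ (your $\mu$), evaluate the series \eqref{3.9} at $x=q$, obtain the linear Volterra equation \eqref{eq3.31} for $r$, justify convergence of the free term and kernel via Lemma~\ref{lemma3.4} and the Weyl asymptotics, and recover $p=-r'/r$ after checking that the differentiated series converges uniformly. Two minor points: your displayed identity $e^{-\lambda_k(t-\tau)+\int_\tau^t p}=\mu(\tau)/\mu(t)$ is a slip (only $e^{\int_\tau^t p}$ equals $\mu(\tau)/\mu(t)$), though the integral equation you then write is correct; and your positivity argument for $\mu$---which the paper does not supply but which is indeed needed to define $p=-\mu'/\mu$---rests on $K(t_*,\cdot)\ge 0$, whereas hypothesis~(ii) asserts only $f_k(t)\phi_k(q)\neq 0$, not $\ge 0$, so that step requires a sign assumption slightly stronger than what is stated.
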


 \begin{proof}  As shown in the proof of Lemma \ref{lemma3.3}, there exists a weak solution $u(x,t)$ to the problem $\{\eqref{3.1},\eqref{3.2}, \eqref{3.3}\}$ for any $p(t)\in C[0,T]$. Now, employing the theory of integral equations and the condition \eqref{3.5}, we  recover the unknown coefficient $p(t)$. Let us introduce the notation $r(t)=e^{-\int_0^t p(\tau)d\tau}$. Then from \eqref{3.9} we obtain
$$
r(t) u(x, t)=\sum_{k=1}^{+\infty}\left(\varphi_k e^{-\lambda_{k}t}+\int_0^t f_k(\tau) e^{-\lambda_{k}(t-\tau)} r(\tau) d \tau\right) \phi_{k}(x).
$$
Hence, using the condition \eqref{3.5}, we get
$$
r(t) w(t)=\sum_{k=1}^{+\infty}\varphi_k e^{-\lambda_{k}t}\phi_{k}(q)+\sum_{k=1}^{+\infty}\left(\int_0^t f_k(\tau) e^{-\lambda_{k}(t-\tau)} r(\tau) d \tau\right) \phi_{k}(q).
$$ 
or in the case $w(t)\neq 0$
\begin{eqnarray}\label{eq3.31}
r(t)=\frac{\sum_{k=1}^{\infty} \varphi_k e^{-\lambda_{k}t} \phi_k(q)}{w(t)}+\frac{1}{w(t)} \int_0^t\left(\sum_{k=1}^{+\infty} \phi_k(q) f_k(\tau) e^{-\lambda_{k}(t-\tau)}\right) r(\tau) d \tau.    
\end{eqnarray}

Now, let us define the conditions under which the series in \eqref{eq3.31} converges. From the equality \eqref{eq3.30} for all $\varphi \in D_m(\Omega)$,  we have
$$
\left|\varphi_k\right|=\frac{1}{\lambda_{k}^m}\left|\left(((-\Delta)^s\varphi)^m, \phi_k \right)_{L^2(\Omega)} \right|.
$$

Assume that $\left|\phi_{k}(q)\right| \leq K$ for some $K= const >0$. Then the majorant of the series
$$
\sum_{k=1}^{+\infty} \varphi_k e^{-\lambda_{k} t} \phi_{k}(q)
$$
is $\sum_{k=1}^{+\infty}\left|\varphi_k\right|$, that is, by the Cauchy-Schwartz inequality we have

$$
\begin{aligned}
\sum_{k=1}^{+\infty}\left|\varphi_k\right|&=\sum_{k=1}^{+\infty} \frac{1}{\lambda_{k}^m}\left| \left(((-\Delta)^s\varphi)^m, \phi_k \right)_{L^2(\Omega)} \right| \\
& \leq\left(\sum_{k=1}^{+\infty} \frac{1}{\lambda_{k}^{2m}}\right)^{\frac{1}{2}}\left(\sum_{k=1}^{+\infty}\left| \left(((-\Delta)^s\varphi)^m, \phi_k \right)_{L^2(\Omega)} \right|^2\right)^{\frac{1}{2}}.
\end{aligned}
$$

Hence, by applying the Bessel inequality, we obtain
$$
\sum_{k=1}^{+\infty} \frac{1}{\lambda_{k}^{2m}} \sum_{k=1}^{+\infty}\left|\left(((-\Delta)^s\varphi)^m, \phi_k \right)_{L^2(\Omega)}\right|^2 \leq \sum_{k=1}^{+\infty} \frac{1}{\lambda_{k}^{2m}}\left\|((-\Delta)^s\varphi)^m\right\|_{L_2(\Omega)}^2.
$$

 Weyl's asymptotic formula \eqref{2.3} yields that the first series in \eqref{eq3.31} is convergent. Similarly, one can show the convergence of the second series in \eqref{eq3.31}. Considering \eqref{eq3.31} as a second-kind Volterra integral equation for $r(t)$, we see that $r(t)$ is a unique solution  and continuous in $[0, T].$ We have
$$
r(t)=e^{-\int_0^t p(\tau) d \tau} \Longrightarrow p(t)=-\frac{r^{\prime}(t)}{r(t)} .
$$

The function $r(t)$ is continuously differentiable if the series $\sum_{k=1}^{+\infty} \varphi_k \lambda_{k} e^{-\lambda_{k} t} \phi_k(q)$ is uniformly convergent. This is indeed the case since the majorant series $M \sum_{k=1}^{+\infty} \lambda_{k}\left|\varphi_k\right|$ converges for $m>\frac{n}{4s}+1$ by Lemma \ref{lemma3.4}.
 \end{proof}

We demonstrate  an explicit example that satisfies the assumptions of Theorem \ref{thrm3.4}. First, let us recall some known facts about the one-dimensional fractional Dirichlet-Laplacian spectral problem:
\begin{eqnarray}\label{eq3.32}
(-\Delta)^s\phi(x) = \lambda \phi(x), \quad x \in (-1, 1),
\end{eqnarray}
where $\phi(x) \in L^2(-1, 1)$ is extended to $\mathbb{R}$ by $0$, and $\left(-\Delta\right)^{s}$ is the one-dimensional regional fractional Laplacian defined as
$$(-\Delta)^s g(x)=c_\alpha P.V.\int\limits_{-\infty}^{+\infty}\frac{g(x)-g(y)}{|x-y|^{1+2s}}dy, \quad 0<s<1, \quad x \in \mathbb{R},
$$
with $c_{\alpha}=\frac{1}{\pi}\Gamma(1+\alpha)\sin \frac{\alpha \pi}{2}.$

It is known that the spectral problem \eqref{eq3.32} has a countable set of eigenvalues
$$0<\lambda_1<\lambda_2\leq \lambda_3\leq ...\leq \lambda_k \leq ..., \nearrow +\infty $$
and the corresponding eigenfunctions $\{\phi_k(x)\}_{k=1}^{+\infty}$ form a complete orthonormal set in $L^2(-1,1)$. Additionally, the eigenfunctions $\phi_k(x)$ are uniformly bounded for $k \geq 1$ and $x \in (-1,1)$, i.e., $|\varphi_k(x)| \leq 3$ for all $k \geq 1$ and $x \in (-1,1)$ (see \cite{21}).

\begin{example}\label{example3.7} The following  inverse problem
\begin{eqnarray}\label{eq3.33}
\begin{aligned}
    \partial_tu(x,t)+\left(-\Delta\right)^{s}u(x,t)&=p(t)u(x,t)+e^{-\lambda_1 t}\phi_1(x), \quad (x,t)\in(-1,1) \times (0,T),\\
    u(x, 0)&=\phi_1(x), \quad x \in (-1,1),\\
u(x,t)&=0, \quad \text{in} \quad \mathbb{R}
\backslash  (-1,1),  \\ 
u(q,t)&= e^{-\lambda_1t}\phi_1(q), \quad t \in [0,T],
\end{aligned}
\end{eqnarray}
satisfies all the assumptions of Theorem \eqref{thrm3.4}.

Indeed, since $\phi_1(x)$ is an eigenfunction of the spectral problem \eqref{eq3.32}, we have $\varphi \in D_2(-1,1)$ and also $f(\cdot, t) \in D_{2}(-1,1)$ for all  $t \in [0,T]$. Furthermore  $\varphi_k=(\varphi, \phi_k)=0,$  $k>1$, $\varphi_1=1,$ $f_k(t)=(f,\phi_k)=0,$ $k>1$, $f_1(t)=e^{-\lambda_1 t}$.  Also $\varphi_1 \phi_1(q)=\phi_1(q) \ne 0,$  and $f_1(t)\phi_1(q)=e^{-\lambda_1 t}\phi_1(q) \ne 0$ with $w(0)=\phi_1(q)$.
Thus, all the assumptions of Theorem \ref{thrm3.4} are satisfied. We have the following integral equation for determining the unknown function $r(t)$ 
$$r(t)-\int_0^t e^{\lambda_1(\tau-t)}r(\tau)d\tau=1, \quad t\in [0,T].$$

This integral equation has the unique solution in the form
$$r(t)=\frac{1}{\lambda_1-1}(\lambda_1e^{\lambda_1t}-e^t).$$ 

Therefore, the solution to the inverse problem \eqref{eq3.33} is given by 
$$u(x,t)=\left(e^{-\lambda_1t+\int_0^tp(\tau)d\tau}+\int_{0}^{t}e^{-\lambda_1t+\int_{\tau}^tp(y)dy}d\tau\right)\phi_1(x), 
\quad p(t)=-\frac{{\lambda}_1^2e^{\lambda_1 t}-e^t}{{\lambda}_1 e^{\lambda_1t}-e^t}.$$

\end{example}

\section{Inverse problem with double datum}
\label{sec4}

Consider the equation  
\begin{eqnarray}\label{eq4.1}
\partial_t u(x,t)+(-\Delta)^su(x,t)=p(t)u(x,t)+f(x,t), \quad (x,t)\in \mathbb{R}^n \times(0,T), 
\end{eqnarray}
with the following initial condition 
\begin{eqnarray}\label{eq4.2}
u(x, 0)  =\varphi(x), \quad x \in \mathbb{R}^n,
\end{eqnarray}
where $(-\Delta)^s$ is the fractional Laplacian operator defined by \eqref{Fourierform}, $\varphi$ and $f$ are given functions. Assume that there exists $q \in \mathbb{R}^n$ such that $f(q, t) \in C^1(0, T)$ is a continuously differentiable function with $f(q, t) \neq 0$. Now, we fix $q \in \mathbb{R}^n$ as an observation point for the following two time-dependent quantities:
\begin{eqnarray}\label{eq4.3}
w_1(t):=v_1(q,t), \quad w_2(t):=v_2(q, t), \quad t \in [0,T].    
\end{eqnarray}

Here
\begin{eqnarray}
\left\{\begin{aligned}\label{eq4.4}
\partial_t v_1(x, t) & +(-\Delta)^s v_1(x, t)=p(t) v_1(x, t)+f(x, t), \quad (x,t)\in \mathbb{R}^n \times(0, T), \\
v_1(x, 0) & =\varphi(x), \quad x \in \mathbb{R}^n,
\end{aligned}\right.    
\end{eqnarray}
and
\begin{eqnarray}
\left\{\begin{aligned}\label{eq4.5}
\partial_t v_2(x, t) & +(-\Delta)^s v_2(x, t)=p(t) v_2(x, t)+(-\Delta)^s f(x, t), \ (x,t) \in \mathbb{R}^n \times(0, T), \\
v_2(x, 0) & =(-\Delta)^s \varphi(x), \quad x \in \mathbb{R}^n.
\end{aligned}\right.    
\end{eqnarray}

We consider the following inverse problem for equation \eqref{eq4.1}:

\begin{problem}\label{problem4.1}
    Find a strong solution $u(x,t)$ and a function $p(t)\in C[0,T]$ in an explicit form satisfying  \eqref{eq4.1}-\eqref{eq4.2} under the condition \eqref{eq4.3}.
\end{problem}

\begin{theorem}\label{Theorem4.3}
Let $\varphi, (-\Delta)^s \varphi \in C(\mathbb{R}^n)\cap L^{\infty}(\mathbb{R}^n)$, $f_t(x,t),$ $(-\Delta)^sf(x,t)\in L^{\infty}(\mathbb{R}^n\times (0,T))$ and  $\partial_t((-\Delta)^sf(x,t)),$  $(-\Delta)^{2s}f(x,t)\in L^{\infty}(\mathbb{R}^n\times (0,T))$. Suppose $(-\Delta)^sf(\cdot,t)\in \hat H^s(\mathbb{R}^n)$ for all $t\in[0,T]$  and the functions $f$ and $(-\Delta)^sf$ have compact support. Let $q \in \mathbb{R}^n$ be such that $f(q, t) \in C^1[0, T]$ is a continuously differentiable function with $f(q, t) \neq 0$. Let $w_1$ and $w_2$ defined in \eqref{eq4.3} be the observation data. Then there exists a unique solution pair $(u, p)$ for the inverse problem \eqref{eq4.1}-\eqref{eq4.3} with
$$
p(t) w_1(t)=-w_1^{\prime}(t)+w_2(t)-f(q, t).
$$    
\end{theorem}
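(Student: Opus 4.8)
The plan is to exploit the structure of the two auxiliary problems: \eqref{eq4.4} is \emph{literally} the original problem \eqref{eq4.1}--\eqref{eq4.2}, while \eqref{eq4.5} turns out to be the equation solved by $(-\Delta)^s u$; hence the two observations $w_1,w_2$ are nothing but $u(q,\cdot)$ and $(-\Delta)^s u(q,\cdot)$, and evaluating the PDE \eqref{eq4.1} at the single point $x=q$ degenerates it into a scalar identity that determines $p$. Concretely, I would first note that by uniqueness of the strong solution of the nonhomogeneous fractional Cauchy problem --- available through the Duhamel representation \eqref{strongsolution} under the stated hypotheses --- one has $v_1\equiv u$, so $w_1(t)=u(q,t)$. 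Then, applying $(-\Delta)^s$ to \eqref{eq4.1} and using that $(-\Delta)^s$, being the Fourier multiplier by $|\xi|^{2s}$, commutes with $\partial_t$ and with multiplication by the $x$-independent factor $p(t)$, I obtain that $(-\Delta)^s u$ solves
$$
\partial_t\bigl((-\Delta)^s u\bigr)+(-\Delta)^s\bigl((-\Delta)^s u\bigr)=p(t)\,(-\Delta)^s u+(-\Delta)^s f,\qquad (-\Delta)^s u(\cdot,0)=(-\Delta)^s\varphi,
$$
which is exactly \eqref{eq4.5}. The hypotheses $(-\Delta)^s\varphi\in C(\mathbb{R}^n)\cap L^\infty(\mathbb{R}^n)$, $\partial_t((-\Delta)^sf),(-\Delta)^{2s}f\in L^\infty(\mathbb{R}^n\times(0,T))$, $(-\Delta)^sf(\cdot,t)\in\hat{H}^s(\mathbb{R}^n)$ and the compact support of $(-\Delta)^sf$ are precisely those under which the existence/uniqueness theory for \eqref{eq2.7}--\eqref{strongsolution} applies to this problem, so by uniqueness $v_2=(-\Delta)^s u$ and $w_2(t)=(-\Delta)^s u(q,t)$.

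With these identifications the reconstruction is immediate: since $u$ is a strong solution, $u_t\in C(\mathbb{R}^n\times(0,T))$, so $t\mapsto u(q,t)$ is differentiable on $(0,T)$ and evaluating \eqref{eq4.1} at $x=q$ yields
$$
\partial_t u(q,t)+(-\Delta)^s u(q,t)=p(t)\,u(q,t)+f(q,t),\qquad t\in(0,T).
$$
Substituting $u(q,t)=w_1(t)$ and $(-\Delta)^s u(q,t)=w_2(t)$, and using $f(q,\cdot)\in C^1[0,T]$ together with the boundary regularity of $u(q,\cdot)$ to extend the identity to the closed interval, a short rearrangement gives the displayed formula for $p(t)\,w_1(t)$ on $[0,T]$.

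For uniqueness, any admissible pair $(u,p)$ is forced, as above, to satisfy $u=v_1$ and $(-\Delta)^s u=v_2$, hence $w_1=u(q,\cdot)$ and $w_2=(-\Delta)^s u(q,\cdot)$, and then the identity of the previous paragraph determines $p$ (at any zero of $w_1$ its right-hand side vanishes too, so $p$ is fixed there by continuity, using that $p\in C[0,T]$ is required), after which $u$ is the unique strong solution of \eqref{eq4.1}--\eqref{eq4.2} with this $p$. Existence I would obtain by taking $p$ to be the function defined by the formula --- continuous on $[0,T]$ under the hypotheses --- reconstructing $u$ as the corresponding forward solution, and verifying the observation conditions \eqref{eq4.3} by reducing, much as in Section~\ref{sec3}, to a linear Volterra equation for an auxiliary unknown; the hypothesis $f(q,t)\neq0$ (together with the compatibility of the data at $t=0$) is what keeps that Volterra problem well-posed.

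I expect the main obstacle to be the identification $v_2=(-\Delta)^s u$: one has to check that $(-\Delta)^s u$ is genuinely a strong solution of \eqref{eq4.5} in the sense of Definition~\ref{def2.4} --- in particular that $\partial_t(-\Delta)^s u$ is continuous and that all convolutions against the fractional heat kernel in \eqref{strongsolution} are legitimate --- and only then invoke uniqueness for that Cauchy problem; this is exactly where the extra smoothness, decay and $\hat{H}^s$-membership assumptions on $f$, $(-\Delta)^s f$ and $\varphi$ are used. A secondary technical point is ensuring that $t\mapsto u(q,t)$ is continuously differentiable up to $t=0$ and $t=T$, so that $w_1'$ in the final formula is meaningful at the endpoints.
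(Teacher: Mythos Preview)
Your derivation is correct and takes a genuinely different route from the paper. The paper first makes the substitution $v(x,t)=r(t)u(x,t)$ with $r(t)=\exp\bigl(-\int_0^t p(\tau)\,d\tau\bigr)$, which kills the $p(t)u$ term and turns \eqref{eq4.4}--\eqref{eq4.5} into two linear Cauchy problems with sources $r(t)f$ and $r(t)(-\Delta)^s f$; it then writes both solutions at $x=q$ via the heat-kernel representation \eqref{strongsolution}, differentiates $\tilde w_1(t)$ using the Leibniz rule together with $\partial_t P=-(-\Delta)^s P$, and passes $(-\Delta)^s$ from the kernel onto $\varphi$ and $f$ by symmetry to recognise the result as $-\tilde w_2(t)+r(t)f(q,t)$; undoing the substitution gives the displayed formula. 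You instead observe directly that $v_1=u$, and that since $(-\Delta)^s$ (a Fourier multiplier) commutes with $\partial_t$ and with multiplication by the $x$-independent factor $p(t)$, one has $v_2=(-\Delta)^s u$ by uniqueness, whereupon evaluating \eqref{eq4.1} at $x=q$ yields the formula in one line. Your argument is more conceptual and avoids the kernel calculus; the paper's computation, on the other hand, never needs to check that $(-\Delta)^s u$ is itself a strong solution of \eqref{eq4.5} in the sense of Definition~\ref{def2.4}, because it works entirely with the explicit integral representations --- this is precisely the regularity verification you correctly flag as the main obstacle. One minor point: your existence sketch via a Volterra equation does not really fit this double-datum setting; the paper dispenses with existence and uniqueness in a single sentence by appealing to well-posedness of the forward problems \eqref{eq4.1}, \eqref{eq4.4}, \eqref{eq4.5}, without setting up any integral equation.
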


\begin{proof}

Note that for arbitrary continuous function $p(t)$, the solution $u$ to the direct problem, using \eqref{strongsolution}, can be defined by:
$$
\begin{array}{r}
\begin{aligned}
 u(x, t)=&\exp \left(\int_0^t p(\tau) d \tau\right) \left\{  \int_{\mathbb{R}^n} P(x-y, t) \varphi(y) dy \right. \\ 
+&\left.\int_0^t \int_{\mathbb{R}^n} P(x-y, t-\tau) \exp \left(-\int_0^\tau p(s) d s\right) f(y, \tau) dy d\tau  \right\}.  
\end{aligned}
\end{array}
$$

Set 
\begin{equation}\label{vtourtop}
v(x, t)=r(t) u(x, t), \quad r(t)=\exp \left(-\int_0^t p(\tau) d \tau\right),
\end{equation}
that is,
$$
p(t)=-\frac{r^{\prime}(t)}{r(t)}, \quad u(x, t)=\frac{v(x, t)}{r(t)} .
$$

Thus, we have
$$
\left\{\begin{aligned}
\partial_t v(x, t) & +(-\Delta)^s v(x, t)=r(t) f(x, t), \quad (x,t) \in \mathbb{R}^n \times(0, T), \\
v(x, 0) & =\varphi(x), \quad x \in \mathbb{R}^n.
\end{aligned}\right.
$$

We now fix a point $q \in \Omega$ as an observation point for two time-dependent quantities:
$$
\tilde{w}_1(t):=w_1(t, q), \quad \tilde{w}_2(t):=w_2(t, q), \quad t \in [0,T].
$$

Here
$$
\left\{\begin{aligned}
\partial_t w_1(x, t) & +(-\Delta)^s w_1(x, t)=r(t) f(x, t), \quad (x,t) \in \mathbb{R}^n \times(0, T), \\
w_1(x, 0) & =\varphi(x), \quad x \in \mathbb{R}^n,
\end{aligned}\right.
$$
and
$$
\left\{\begin{aligned}
\partial_t w_2(x, t) &+ (-\Delta)^s w_2(x, t)=r(t)(-\Delta)^s f(x, t), \quad (x,t) \in \mathbb{R}^n \times(0, T), \\
w_2(x, 0) & =(-\Delta)^s \varphi(x), \quad x \in \mathbb{R}^n.
\end{aligned}\right.
$$

Under the assumptions of Theorem \ref{Theorem4.3}, the strong solutions of the Cauchy problems \eqref{eq4.4} and \eqref{eq4.5} can be presented as follows:
\begin{equation*}
    \begin{split}
w_1(x, t)& =\int_{\mathbb{R}^n} P(x-y, t) \varphi(y) d y\\ &+\int_0^t \int_{\mathbb{R}^n} P(x-y, t-\tau) r(\tau) f(y, \tau) dy d\tau        
    \end{split}
\end{equation*}
and
\begin{equation*}
    \begin{split}
w_2(x,t)&=\int_{\mathbb{R}^n} P(x-y, t)(-\Delta)^s \varphi(y) dy\\ &+\int_0^t \int_{\mathbb{R}^n} P(x-y, t-\tau) r(\tau) (-\Delta)^s f(y,\tau) dy d\tau.        
    \end{split}
\end{equation*}

Then, for $q \in \mathbb{R}^n$ and for all $t \in[0,T]$, we have
\begin{equation*}
\begin{split}
 w_1(q, t)& =\int_{\mathbb{R}^n} P(q-y, t) \varphi(y) dy\\ & +\int_0^t \int_{\mathbb{R}^n} P(q-y, t-\tau) r(\tau) f(y, \tau) d y d \tau= \tilde{w}_1(t), \\
 w_2(q, t)& =\int_{\mathbb{R}^n} P(q-y, t) (-\Delta)^s \varphi(y) d y \\ & +\int_0^t \int_{\mathbb{R}^n} P(q-y, t-\tau) r(\tau) (-\Delta)^s f(y, \tau) dy d\tau=\tilde{w}_2(t).
\end{split}   
\end{equation*}

By differentiating $\tilde{w}_1$ and applying the Leibniz integral rule, then using 
\begin{eqnarray*}
    \begin{aligned}
  \partial_t P(x-y, t-\tau)=&-(-\Delta)^s P(x-y, t-\tau), \quad t>\tau, \\  \partial_t P(x-y, t)=&-(-\Delta)^s P(x-y, t), \quad t>0,          \end{aligned}
\end{eqnarray*}
and also using the symmetry of the operator $(-\Delta)^s$, we get
$$
\begin{aligned}
 \tilde{w}_1^{\prime}(t)&=\int_0^t \int_{\mathbb{R}^n} \partial_t P(q-y, t-\tau) r(\tau) f(y, \tau) dy d\tau \\
& +r(t) f(q,t)+\int_{\mathbb{R}^n} \partial_t P(q-y,t) \varphi(y) dy
\\&=r(t) f(q,t)-\int_{\mathbb{R}^n} (-\Delta)^s P(q-y,t) \varphi(y) d y \\
&  -\int_0^t \int_{\mathbb{R}^n} (-\Delta)^s P(q-y,t-\tau) r(\tau) f(y, \tau) dy d\tau\\
& =-\int_0^t \int_{\mathbb{R}^n} P(q-y, t-\tau) r(\tau) (-\Delta)^sf(y, \tau) dyd\tau\\
&+r(t)f(q,t) -\int_{\mathbb{R}^n} P(q-y, t) (-\Delta)^s \varphi(y) dy\\ & =-\tilde{w}_2(t)+r(t) f(q, t) .
\end{aligned}
$$

Hence, we obtain
\begin{eqnarray}\label{eq4.6}
\tilde{w}_2(t)=-\tilde{w}_1^{\prime}(t)+r(t) f(q, t).    
\end{eqnarray}

Moreover, we have the following relations
$$
\tilde{w}_1(t):=r(t) w_1(t), \quad \tilde{w}_2(t):=r(t) w_2(t), \quad t \in [0,T] .
$$

Taking these into account, from \eqref{eq4.5}, we get 
$$
p(t) w_1(t)=-w_1^{\prime}(t)+w_2(t)-f(q, t).
$$

We note that the direct Cauchy problems \eqref{eq4.1}, \eqref{eq4.4}, and  \eqref{eq4.5} have a unique solution under the assumptions of Theorem \ref{Theorem4.3}. It implies that there exists a unique pair $(v, r)$, that is, $(u, p)$ for the inverse problem \ref{problem4.1}. 

The proof of Theorem \ref{Theorem4.3} is complete.
\end{proof}

\section{Inverse problem with non-local datum}
\label{sec5}

Let $\Omega$ be a bounded Lipschitz open set in $\mathbb{R}^n$. We consider the following equation:
\begin{equation}\label{5.1}
\partial_tu(x,t) + (-\Delta)^s u(x,t) = r(t)f(x,t), \quad (x,t) \in \Omega \times (0,T),
\end{equation}
with Cauchy-Dirichlet conditions \eqref{3.2} and \eqref{3.3}, where $(-\Delta)^s$ is the regional fractional Laplacian operator, and $\varphi(x)$ and $f(x,t)$ are given functions such that $\varphi,$ $f(\cdot,t) \in L^2(\Omega)$, and $r(t)$ is an unknown coefficient. 

Note that the equation  \eqref{5.1} is equivalent to \eqref{3.1} through the transform \eqref{vtourtop}.

For any $r(t)\in C[0,T]$, the definition of a weak solution to $\{\eqref{5.1},\eqref{3.2},\eqref{3.3}\}$ is given as follows:
\begin{definition} \label{def5.1}
  A function $u\in C([0,T]; L^2(\Omega)) \cap C((0,T];\mathbb{H}_0^s(\Omega))\cap C^1((0,T]; L^2(\Omega))$ is called a weak solution of the problem $\{\eqref{5.1},\eqref{3.2},$ $\eqref{3.3}\}$ if  it satisfies  \begin{equation}\label{5.2}
      (\partial_tu,v)_{L^2(\Omega)}+     \langle u,v \rangle_{\mathbb{H}_0^s({\mathbb{R}^n})}=(r(t)f,v)_{L^2(\Omega)},\; 
      \text{for all} \; v\in \mathbb{H}_0^s(\Omega),
  \end{equation}
  and the initial condition \eqref{3.2}. 
  \end{definition}

\begin{problem}\label{problem5.2}
Show the existence and uniqueness of a pair of functions $(u(x,t),r(t))$ such that $r(t)\in C[0,T]$, and $u(x,t)$ is a weak solution to the problem $\left\{\eqref{5.1}, \eqref{3.2},\right.$ $\left. \eqref{3.3}\right\}$, satisfying the nonlocal condition
\begin{eqnarray}\label{5.3}
\int_{\Omega} \omega(x) u(x, t) d x=w(t), \quad t \in[0, T],    
\end{eqnarray}
where $\omega(x)$ and $w(t)$ are given functions such that $\omega(x) \in L_2(\Omega)$ and $w \in C^1[0, T]$.     
\end{problem}

Let $\{ \phi_{k}(x)\}_{k=1}^{+\infty}$ 
be eigenfunctions of the fractional Dirichlet-Laplacian spectral problem \eqref{weakformulation}. We seek a solution to the problem $\{\eqref{5.1},\eqref{3.2},\eqref{3.3}\}$ in the form \eqref{3.6}. Substituting \eqref{3.6} into \eqref{5.2} and setting $v(x)$ as $\phi_k(x)$ in \eqref{5.2}, and performing similar steps as in the case of Problem \ref{problem3.2}, we obtain:
\begin{equation}\label{Cauchyproblem}
\begin{cases}
u_k^{\prime}(t) + \lambda_{k}u_k(t) = r(t)f_k(t), \quad t \in (0, T), \\
u_k(0) = \varphi_k, 
\end{cases}
\end{equation}
where $\varphi_k=(\varphi,\phi_k)_{L^2(\Omega)}$ and $f_k(t)=(f,\phi_k)_{L^2(\Omega)}$.

The solution of Cauchy problem \eqref{Cauchyproblem} has the following form 
$$u_k(t)=\varphi_k e^{-\lambda_{k}t}+\int_0^tf_k(\tau)e^{-\lambda_{k}(t-\tau)}d\tau, \quad k=1,2,\ldots .$$

Substituting the obtained expression of $u_k(t)$ into \eqref{3.6}, we get  
\begin{eqnarray}\label{5.5}
u(x,t)=\sum_{k=1}^{\infty}\left(\varphi_k e^{-\lambda_{k} t}\phi_k(x) +\int_0^t f_k(\tau) e^{-\lambda_{k}(t-\tau)} r(\tau) d \tau \right) \phi_k(x).    
\end{eqnarray}

The following lemma shows that the function $u(x,t)$ defined by \eqref{5.5} is a weak solution to the problem $\{\eqref{5.1},\eqref{3.2},$ $\eqref{3.3}\}$ for any function $r(t)\in C[0,T]$.
\begin{lemma}\label{lemma5.3}
Let $\varphi(x)\in  L^2(\Omega)$ and $f(x,t)\in C([0,T];L^2(\Omega) )\cap C^1((0,T];L^2(\Omega))$. Then the function $u(x,t)$ defined by \eqref{5.5} is a weak solution of the problem  $\{\eqref{5.1},\eqref{3.2},$ $\eqref{3.3}\}$ for any arbitrary $r(t)\in C[0,T]$.
\end{lemma}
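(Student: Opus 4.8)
The plan is to follow the same route as the proof of Lemma~\ref{lemma3.3}, observing that \eqref{5.1} is the special case of \eqref{3.1} in which the zeroth-order term $p(t)u$ is absent and the forcing is $r(t)f(x,t)$. Since $r\in C[0,T]$ it is bounded, say $|r(t)|\le R:=\|r\|_{C[0,T]}$, and this bound plays exactly the role that the quantity $M=\|e^{\int_0^t p(\tau)d\tau}\|_{C[0,T]}$ played in Lemma~\ref{lemma3.3}. The function $u$ in \eqref{5.5} has Fourier coefficients $u_k(t)=\varphi_k e^{-\lambda_k t}+\int_0^t f_k(\tau)e^{-\lambda_k(t-\tau)}r(\tau)\,d\tau$, which by construction solve the Cauchy problems \eqref{Cauchyproblem}; thus I would simply verify the three membership statements in Definition~\ref{def5.1} and then check that \eqref{5.2} and \eqref{3.2} hold.

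For $u\in C([0,T];L^2(\Omega))$, orthonormality of $\{\phi_k\}$ in $L^2(\Omega)$ and Parseval's identity give $\|u(\cdot,t)\|_{L^2(\Omega)}^2=\sum_k u_k(t)^2$; splitting $u_k$ via $(a+b)^2\le 2a^2+2b^2$, the first part is $\le 2\sum_k\varphi_k^2 e^{-2\lambda_k t}\le 2\|\varphi\|_{L^2(\Omega)}^2$, and the second, by Cauchy--Schwarz together with $\int_0^t e^{-2\lambda_k(t-\tau)}d\tau\le \frac{1}{2\lambda_1}$, is $\le C R^2\|f\|_{C([0,T];L^2(\Omega))}^2$; continuity in $t$ follows from uniform convergence. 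For $u\in C((0,T];\mathbb{H}_0^s(\Omega))$, since $\{\phi_k/\sqrt{\lambda_k}\}$ is orthonormal in $\mathbb{H}_0^s(\Omega)$ one has $\|u(\cdot,t)\|_{\mathbb{H}_0^s(\Omega)}^2=\sum_k\lambda_k u_k(t)^2$, and on any $[t_0,T]$ with $0<t_0<T$ the estimate $\sqrt{\lambda_k}\,e^{-\lambda_k t}\le\frac{1}{t_0 e\sqrt{\lambda_1}}$ from \eqref{eq3.18} together with $\lambda_k\int_0^t e^{-2\lambda_k(t-\tau)}d\tau\le\frac12$ yields uniform convergence in $\mathbb{H}_0^s(\Omega)$ on $[t_0,T]$.

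The delicate point is $u\in C^1((0,T];L^2(\Omega))$. Differentiating \eqref{5.5} termwise gives $u_k'(t)=-\lambda_k\varphi_k e^{-\lambda_k t}+r(t)f_k(t)-\lambda_k\int_0^t r(\tau)f_k(\tau)e^{-\lambda_k(t-\tau)}d\tau$, and the problematic last integral carries a bare factor $\lambda_k$. I would remove it by integration by parts, using $-\lambda_k e^{-\lambda_k(t-\tau)}=-\partial_\tau e^{-\lambda_k(t-\tau)}$, which transfers $\lambda_k$ onto the derivative of $r(\tau)f_k(\tau)$; combined with $\int_0^t e^{-2\lambda_k(t-\tau)}d\tau\le\frac{1}{2\lambda_1}$ and the hypothesis $f\in C^1((0,T];L^2(\Omega))$, this gives $\sum_k u_k'(t)^2\le C\bigl(\|\varphi\|_{L^2(\Omega)}^2+\|f\|_{C^1((0,T];L^2(\Omega))}^2\bigr)$ uniformly on $[t_0,T]$, in direct analogy with the passage from \eqref{3.23} to \eqref{derivativenorm}; here one uses that an admissible $r$ for the inverse problem, being of the form $e^{-\int_0^t p}$ with $p$ continuous, is in fact $C^1$ (or, equivalently, one records this extra smoothness of $r$). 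Uniform convergence of the differentiated series then yields $u_t\in C((0,T];L^2(\Omega))$.

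Finally, since each $u_k$ solves \eqref{Cauchyproblem}, testing \eqref{5.5} against $v=\phi_k$ recovers \eqref{5.2} for every basis element and hence, by linearity and the just-proved regularity, for all $v\in\mathbb{H}_0^s(\Omega)$, while $u_k(0)=\varphi_k$ gives \eqref{3.2}; uniqueness, if desired, follows because the difference of two weak solutions has coefficients satisfying $w_k'+\lambda_k w_k=0$, $w_k(0)=0$, so $w_k\equiv0$. I expect the only real obstacle to be the $C^1$-in-time step, i.e.\ controlling $\sum_k\lambda_k^2 u_k(t)^2$, which forces the integration-by-parts argument and the $C^1$ hypothesis on $f$; everything else is a routine adaptation of Lemma~\ref{lemma3.3}.
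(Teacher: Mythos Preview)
Your proposal matches the paper's proof, which consists of the single sentence ``The proof is similarly to the one of Lemma~\ref{lemma3.3}''; you have simply carried out that adaptation in detail, with $p\equiv0$ and $f$ replaced by $rf$, exactly as intended.

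Your flag on the $C^1$-in-time step is well taken and is a subtlety the paper glosses over: transferring the derivative onto $r(\tau)f_k(\tau)$ literally requires $r\in C^1$, while the lemma is stated for $r\in C[0,T]$. Your remedy (note that the relevant $r=e^{-\int_0^t p}$ is automatically $C^1$) suffices for the paper's applications. If you want the lemma exactly as stated, integrate by parts the other way: with $u=f_k(\tau)$ and $dv=r(\tau)\lambda_k e^{-\lambda_k(t-\tau)}\,d\tau$, the antiderivative $v(\tau)=\int_0^\tau r(s)\lambda_k e^{-\lambda_k(t-s)}\,ds$ satisfies $|v(\tau)|\le \|r\|_{C[0,T]}\bigl(e^{-\lambda_k(t-\tau)}-e^{-\lambda_k t}\bigr)\le \|r\|_{C[0,T]}$ uniformly in $k$, so the boundary term $f_k(t)v(t)$ and the integral $\int_0^t f_k'(\tau)v(\tau)\,d\tau$ are controlled using only $r\in C[0,T]$ and the $C^1$ hypothesis on $f$.
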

\begin{proof}
The proof is similarly to the one of Lemma \ref{lemma3.3}. 
\end{proof}

\begin{theorem}\label{thrm5.4}
Let the assumptions of Lemma \ref{lemma5.3}  and the following conditions hold:

1. $\varphi \in D_m(\Omega)$;

2. $f \in C(\Omega \times[0, T]), f(x, t) \in D_m(\Omega)$ and $\int_{\Omega} \omega(x) f(x, t) d x \neq 0$ for $\forall t \in[0, T]$;

3. $w \in C^1[0, T]$ with $w(t) \neq 0$ for $\forall t \in[0, T]$ and $w(0)=\int_{\Omega} \omega(x) \varphi(x) d x$.

Then, there exists a unique pair of functions $(u, r)$ that solves Problem \ref{problem5.2}.
\end{theorem}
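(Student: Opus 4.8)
The plan is to mimic the structure of the proof of Theorem~\ref{thrm3.4}, reducing the inverse problem to a second-kind Volterra integral equation for the unknown coefficient $r(t)$ and then invoking the classical existence-uniqueness theory for such equations. First I would take the series representation \eqref{5.5} of the weak solution (which, by Lemma~\ref{lemma5.3}, solves $\{\eqref{5.1},\eqref{3.2},\eqref{3.3}\}$ for any continuous $r$), multiply by $\omega(x)$, integrate over $\Omega$, and substitute the nonlocal overdetermination condition \eqref{5.3}. Writing $\omega_k=(\omega,\phi_k)_{L^2(\Omega)}$, this yields
\begin{equation*}
w(t)=\sum_{k=1}^{+\infty}\omega_k\varphi_k e^{-\lambda_k t}+\int_0^t\left(\sum_{k=1}^{+\infty}\omega_k f_k(\tau)e^{-\lambda_k(t-\tau)}\right) r(\tau)\,d\tau ,
\end{equation*}
and, since $w(t)\neq0$ on $[0,T]$, this is equivalent to a Volterra integral equation of the second kind for $r(t)$:
\begin{equation*}
r(t)=\frac{1}{K(t)}\left(w'(t)-\sum_{k=1}^{+\infty}\omega_k\varphi_k(-\lambda_k)e^{-\lambda_k t}\right)-\frac{1}{K(t)}\int_0^t\left(\sum_{k=1}^{+\infty}\omega_k f_k(\tau)\partial_t\!\left(e^{-\lambda_k(t-\tau)}\right)\right) r(\tau)\,d\tau ,
\end{equation*}
where $K(t):=\sum_{k=1}^{+\infty}\omega_k f_k(t)=\int_\Omega\omega(x)f(x,t)\,dx$, which is continuous and nonvanishing on $[0,T]$ by hypothesis~2. (Alternatively one can differentiate the integrated identity directly, using $w\in C^1[0,T]$, to arrive at the same equation.)

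The next step is to justify the convergence and continuity of all the series appearing above, so that the integral equation has continuous data and continuous kernel. For the free term I would use the Cauchy--Schwarz inequality together with $|\omega_k|$ being square-summable (as $\omega\in L^2(\Omega)$) and the decay of $\varphi_k$: since $\varphi\in D_m(\Omega)$, formula \eqref{eq3.30} gives $|\varphi_k|=\lambda_k^{-m}|(((-\Delta)^s\varphi)^m,\phi_k)_{L^2(\Omega)}|$, and the factor $\lambda_k e^{-\lambda_k t}$ is uniformly bounded on $[t_0,T]$; Weyl's asymptotic formula \eqref{2.3} and the choice $m>\frac{n}{4s}+1$ then ensure absolute and uniform convergence, exactly as in Lemma~\ref{lemma3.4}. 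For the kernel I would argue similarly using $f(\cdot,t)\in D_m(\Omega)$ together with $f\in C(\Omega\times[0,T])$, plus the elementary bound $\lambda_k\int_0^t e^{-2\lambda_k(t-\tau)}\,d\tau\le\frac12$, to get a continuous (indeed bounded) Volterra kernel on the triangle $0\le\tau\le t\le T$. Having verified this, the standard Picard iteration / successive approximations argument for Volterra equations of the second kind produces a unique continuous solution $r\in C[0,T]$.

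Finally, with $r(t)$ determined, $u(x,t)$ is given explicitly by \eqref{5.5} and is a weak solution by Lemma~\ref{lemma5.3}; the compatibility condition $w(0)=\int_\Omega\omega(x)\varphi(x)\,dx$ in hypothesis~3 guarantees consistency of the nonlocal condition at $t=0$. Uniqueness follows because any solution pair $(u,r)$ must, by the same computation, satisfy the Volterra equation, whose solution is unique; then $u$ is forced to equal \eqref{5.5}. The main obstacle I anticipate is purely technical rather than conceptual: ensuring the series defining the kernel $\sum_k\omega_k f_k(\tau)e^{-\lambda_k(t-\tau)}$ (and its $t$-derivative, if one differentiates) converge uniformly on the closed triangle including the diagonal $t=\tau$, where the exponential regularization is absent---this is where the hypothesis $f(\cdot,t)\in D_m(\Omega)$ does the essential work, and one must be a little careful to combine the $D_m$-decay of $f_k$ with the $\ell^2$-summability of $\omega_k$ without losing uniformity in $t$.
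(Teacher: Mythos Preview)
Your proposal is correct and follows essentially the same route as the paper: differentiate the integrated overdetermination identity to obtain a second-kind Volterra equation for $r(t)$ with denominator $K(t)=\int_\Omega\omega(x)f(x,t)\,dx$, verify convergence of the free term and kernel via the $D_m$-decay of $\varphi_k$, $f_k$ together with Lemma~\ref{lemma3.4} and Weyl's asymptotics, and then invoke the standard Volterra existence--uniqueness theorem. One small remark: the passage ``since $w(t)\neq0$'' is not actually what turns the identity into a second-kind equation---it is the non-vanishing of $K(t)$ (hypothesis~2) that allows you to divide and isolate $r(t)$, exactly as you use it immediately afterward; the condition $w(t)\neq0$ plays no role in this argument (nor in the paper's).
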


\begin{proof}

From  Lemma \ref{lemma5.3} it follows that the function $u(x,t)$ defined by  \eqref{5.5} is a weak solution of the direct problem for any $r(t)\in C[0,T]$.
Now, we show the existence and uniqueness  of the solution of the inverse problem \ref{problem5.2}. 

To this end, differentiating  \eqref{5.5} with respect to  $t$, we obtain 
\begin{equation*}
    \begin{split}
u_t(x, t)&=\sum_{k=1}^{+\infty}\left(-\lambda_{k} \varphi_k e^{-\lambda_{k} t}\right)\phi_k(x)\\ &-\sum_{k=1}^{+\infty}\left(\lambda_k \int_0^t f_k(\tau) e^{-\lambda_{k}(t-\tau)} r(\tau) d \tau-f_k(t) r(t)\right) \phi_k(x).       
    \end{split}
\end{equation*}
Hence, from the over-determination condition \eqref{5.3}, we have
\begin{equation*}
    \begin{split}
\int_{\Omega} \omega(x) u_t(x,t)dx &=\sum_{k=1}^{+\infty}  \left(-\lambda_{k} \varphi_k e^{-\lambda_{k} t}\right)\int_{\Omega} \omega(x) \phi_k(x) d x \\ &+\sum_{k=1}^{+\infty}\left(-\lambda_{k} \int_0^t f_k(\tau) e^{-\lambda_{k}(t-\tau)} r(\tau) d \tau\right) \int_{\Omega} \omega(x) \phi_k(x) d x \\
& +r(t) \int_{\Omega} \omega(x) \sum_{k=1}^{+\infty} f_k(t) \phi_k(x) dx=w^{\prime}(t)      
    \end{split}
\end{equation*}
or
\begin{eqnarray}\label{eq5.4}
    \begin{aligned}
r(t)=&\frac{w^{\prime}(t)+\sum_{k=1}^{\infty} \lambda_{k} \varphi_k e^{-\lambda_{k} t} \int_{\Omega} \omega(x) \phi_k(x) dx}{\int_{\Omega} \omega(x) f(x,t) dx}\\
+\frac{1}{\int_{\Omega} \omega(x) f(x, t)dx} &\int_0^t\left(\sum_{k=1}^{+\infty} \lambda_{k} f_k(\tau) e^{-\lambda_{k}(t-\tau)} d \tau \int_{\Omega} \omega(x) \phi_k(x) dx\right) r(\tau) d\tau 
\end{aligned}
\end{eqnarray}
since 
$$\int_{\Omega} \omega(x) \sum\limits_{k=1}^{+\infty} f_k(t) \phi_k(x) dx=\int_{\Omega} \omega(x) f(x,t) dx.$$

We have $\omega \in L^2(\Omega)$, that is, $\int_{\Omega} \omega^2(x) d x \leq M$ in \eqref{eq5.4} for some $M=$ const $>0$. The series
$$
\sum_{k=1}^{\infty} \lambda_{k} \varphi_k e^{-\lambda_{k} t} \int_{\Omega} \omega(x) \phi_k(x) dx
$$
is uniformly convergent since the majorant series $M \sum_{k=1}^{\infty} \lambda_{k}\left|\varphi_k\right|$ is convergent by Lemma \ref{lemma3.4}. Thus, the Volterra integral equation \eqref{eq5.4} has a unique continuous solution.
    The uniqueness of the weak solution to the problem $\{\eqref{5.1},\eqref{3.2},\eqref{3.3}\}$ for any $r(t)\in C[0,T]$ follows from the completeness of the system of eigenfunctions $\{\phi_k\}_{k\in\mathbb{N}}$ in $L^2(\Omega)$.
\end{proof}

Here we provide an example that satisfies all the assumptions of Theorem \ref{thrm5.4}.

\begin{example}\label{example5.6} The following one-dimensional inverse problem
\begin{eqnarray}\label{5.7}
\begin{aligned}
    \partial_tu(x,t)+\left(-\Delta\right)^{s}u(x,t)&=r(t)(1+\sin^2t)\phi_1(x), \quad (x,t)\in(-1,1) \times (0,T),\\
    u(x, 0)&=\phi_1(x), \quad x \in (-1,1),\\
u(x,t)&=0, \quad \text{in} \quad \mathbb{R}
\backslash  (-1,1),  \\ 
\int_{\Omega} \phi_1(x) & u(x, t) dx=1+t^2, \quad t \in[0, T],    \end{aligned}
\end{eqnarray}
satisfies all the assumptions of Theorem \ref{thrm5.4}, where $\phi_1(x)$ is the first eigenfunction of one-dimensional fractional problem \eqref{eq3.32}.

Indeed, since $\varphi(x)=\phi_1(x),$ $f(x,t)=(1+\sin^2t)\phi_1(x)$, and $\omega(x)=\phi_1(x)$, $w(t)=1+t^2$, we have $\varphi(x)\in D_2(-1,1),$ and $f(x,t)\in D_2(-1,1)$ for all $t\in[0,T]$. Furthermore, $\omega(x)\in L^2(-1,1)$ since $\phi_1(x)\in L^2(-1,1)$ and $w(0)=1=\int_{-1}^1\phi_1^2(x)dx=1$ from the orthonormality of eigenfunctions $\phi_k(x),$ $k=1,2,\ldots $. 

Also, $w(t)\in C^1[0,T]$ and 
\begin{equation*}
    \begin{split}  \int_{-1}^1\omega(x)f(x,t)dx&=\int_{-1}^1\phi^2_1(x)(1+\sin^2t)dx\\&=(1+\sin^2t)\int_{-1}^1\phi^2_1(x)dx=1+\sin^2t\ne0      
    \end{split}
\end{equation*}
for all $t\in[0,T]$. Thus, all the assumptions of Theorem \ref{thrm5.4} are satisfied. To find the unknown coefficient $r(t)$, we get the following integral equation:
\begin{equation}\label{5.8}
    r(t)=\frac{2t+\lambda_1e^{-\lambda_1t}}{1+\sin^2t}+\frac{\lambda_1}{1+\sin^2t}\int_0^t(1+\sin^2\tau)e^{-\lambda_1(t-\tau)}r(\tau)d\tau, \quad t\in[0,T],
\end{equation}
where $\lambda_1$ is the first eigenvalue of fractional spectral problem \eqref{eq3.32}.

Introducing the notation 
    $$r_1(t)=(1+\sin^2t)e^{\lambda_1t}r(t),$$
from  \eqref{5.8} we obtain
$$r_1(t)-\lambda_1\int_0^tr_1(\tau)d\tau=2te^{\lambda_1t}+\lambda_1.$$ 
It implies 
$$r_1^{\prime}(t)-\lambda_1r_1(t)=2\lambda_1te^{\lambda_1t}+2e^{\lambda_1t}.$$

The solution of this ordinary differential equation, satisfying the condition $r_1(0)=\lambda_1$, has the form    $$r_1(t)=e^{\lambda_1t}(\lambda_1+2t+\lambda_1t^2).$$

Then, the solution of inverse problem \eqref{5.7}, i.e. the pair of functions $r(t)$ and $u(x,t)$ are defined by 
$$r(t)=\frac{\lambda_1+2t+\lambda_1t^2}{1+\sin^2t},$$
and
$$u(x,t)=\left(e^{-\lambda_1t}+\int_0^t(1+\sin^2\tau)e^{-\lambda_1(t-\tau)}r(\tau)d\tau\right)\phi_1(x).$$

\end{example}

\section{Inverse problem identifying space variable source function}
\label{sec6}

Let $\Omega$ be a bounded Lipschitz open set in $\mathbb{R}^n$.  We consider the fractional heat equation
\begin{eqnarray}\label{6.1}
    \partial_t u(x,t)+(-\Delta)^su(x,t)=f(x),\quad (x,t)\in \Omega \times (0,T), 
\end{eqnarray}
with the  Cauchy-Dirichlet conditions \eqref{3.2} and \eqref{3.3}. 

First, we introduce the definition of a weak solution to the direct problem $\left\{\eqref{6.1},\right.$ $ \left.\eqref{3.2},\eqref{3.3}\right\}$.

\begin{definition} \label{def6.1}
  A function $u\in C([0,T]; L^2(\Omega)) \cap C((0,T];\mathbb{H}_0^s(\Omega))\cap C^1((0,T]; L^2(\Omega))$ is called a weak solution of the problem $\{\eqref{6.1},\eqref{3.2},$ $\eqref{3.3}\}$ if  it satisfies \begin{equation}\label{6.4}
 (\partial_tu,v)_{L^2(\Omega)}+     \langle u,v \rangle_{\mathbb{H}_0^s({\mathbb{R}^n})}=(f,v)_{L^2(\Omega)}\; \text{for all}\; v\in \mathbb{H}_0^s(\Omega),
  \end{equation}
  and the initial condition \eqref{3.2}. 
  \end{definition}

\begin{problem}\label{problem6.2}
Show the existence and uniqueness of a pair of functions $\{u(x,t),f(x)\}$ such that $f(x)\in L^2(\Omega)$, and $u(x,t)$ is a weak solution to the problem $\left\{\eqref{6.1}, \eqref{3.2},\right.$ $\left. \eqref{3.3}\right\}$, satisfying the condition
\begin{eqnarray}\label{6.5}
    u(x,T)=\psi(x), \quad x \in \Omega,
\end{eqnarray}
where $\psi(x)$ is a given function.    
\end{problem}

\begin{theorem}\label{Theorem6.3}
 Let $\varphi$, $\psi \in \mathbb{H}_0^s(\Omega)$. Then the following unique pair of functions $\{u(x,t),f(x)\}$ solves Problem \ref{problem6.2}:
 \begin{eqnarray}\label{6.6}
u(x,t)=\sum\limits_{k=1}^{+\infty}\left(\varphi_k+\frac{\varphi_k-\psi_k}{1-e^{-\lambda_{k}T}}\right)(e^{-\lambda_{k}t}-1)\phi_{k}(x),
 \end{eqnarray}
 and \begin{eqnarray}\label{6.7}
    f(x)=\sum_{k=1}^{+\infty}  \lambda_k\frac{\psi_k-\varphi_ke^{-\lambda_kT}}{\left(1-e^{-\lambda_{k} T}\right)}\phi_k(x).
 \end{eqnarray}
\end{theorem}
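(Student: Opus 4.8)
The plan is to solve Problem \ref{problem6.2} by the Fourier method with respect to the orthonormal basis $\{\phi_k\}_{k=1}^{+\infty}$ of $L^2(\Omega)$ coming from the spectral problem \eqref{weakformulation}. Writing the unknown source as $f(x)=\sum_{k=1}^{+\infty}f_k\phi_k(x)$ with $f_k=(f,\phi_k)_{L^2(\Omega)}$ and seeking $u(x,t)=\sum_{k=1}^{+\infty}u_k(t)\phi_k(x)$, I would test the weak identity \eqref{6.4} with $v=\phi_k$ and use $\langle u,\phi_k\rangle_{\mathbb{H}_0^s(\mathbb{R}^n)}=\lambda_k u_k(t)$ together with $(\partial_t u,\phi_k)_{L^2(\Omega)}=u_k'(t)$ to reduce the direct problem to the scalar initial value problems
\begin{equation*}
u_k'(t)+\lambda_k u_k(t)=f_k,\qquad u_k(0)=\varphi_k,\qquad k\in\mathbb{N},
\end{equation*}
whose solution is $u_k(t)=\varphi_k e^{-\lambda_k t}+\tfrac{f_k}{\lambda_k}\bigl(1-e^{-\lambda_k t}\bigr)$. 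Then I would impose the overdetermination \eqref{6.5}, i.e. $u_k(T)=\psi_k:=(\psi,\phi_k)_{L^2(\Omega)}$, to pin down the coefficients: since $\lambda_k\ge\lambda_1>0$ gives $1-e^{-\lambda_k T}\ge 1-e^{-\lambda_1 T}>0$, the relation $\psi_k=\varphi_k e^{-\lambda_k T}+\tfrac{f_k}{\lambda_k}(1-e^{-\lambda_k T})$ can be solved uniquely for $f_k$, yielding \eqref{6.7}; substituting this $f_k$ back into $u_k(t)$ and rearranging produces \eqref{6.6}. At the formal level this shows \eqref{6.6}--\eqref{6.7} is the only possible candidate.

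The substantive part is to check that this candidate is admissible. Using $\varphi,\psi\in\mathbb{H}_0^s(\Omega)$, i.e. $\sum_k\lambda_k\varphi_k^2<\infty$ and $\sum_k\lambda_k\psi_k^2<\infty$ (because $\{\phi_k/\sqrt{\lambda_k}\}$ is orthonormal in $\mathbb{H}_0^s(\Omega)$), I would verify that the series \eqref{6.7} defines $f\in L^2(\Omega)$ and that \eqref{6.6} converges in $C([0,T];L^2(\Omega))\cap C((0,T];\mathbb{H}_0^s(\Omega))$ with its $t$-differentiated series converging in $C((0,T];L^2(\Omega))$, following the pattern of the proof of Lemma \ref{lemma3.3}. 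The tools are the uniform bounds $\lambda_k^j e^{-\lambda_k t}\le C(j,t_0)$ for $t\ge t_0>0$, the elementary estimate $\sup_{z>0}ze^{-z}=e^{-1}$, and Weyl's asymptotic formula \eqref{2.3} to bound the relevant tails. Once convergence is established, term-by-term differentiation shows that $u$ satisfies \eqref{6.4}, the equalities $u_k(0)=\varphi_k$ and $u_k(T)=\psi_k$ give the initial condition \eqref{3.2} and the final condition \eqref{6.5}, and the vanishing of $u$ on $\mathbb{R}^n\backslash\Omega$ is built into the choice of basis.

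For uniqueness, suppose $(u^{(1)},f^{(1)})$ and $(u^{(2)},f^{(2)})$ both solve Problem \ref{problem6.2}. Then $w:=u^{(1)}-u^{(2)}$ is a weak solution of \eqref{6.1}--\eqref{3.2}--\eqref{3.3} with constant-in-time source $g:=f^{(1)}-f^{(2)}\in L^2(\Omega)$, $w(\cdot,0)=0$ and $w(\cdot,T)=0$. Projecting onto $\phi_k$ gives $w_k(t)=\tfrac{g_k}{\lambda_k}(1-e^{-\lambda_k t})$, and $w_k(T)=0$ forces $g_k(1-e^{-\lambda_k T})=0$, hence $g_k=0$ for every $k$; thus $f^{(1)}=f^{(2)}$ in $L^2(\Omega)$, and then $w\equiv 0$ by completeness of $\{\phi_k\}$ in $L^2(\Omega)$ (equivalently, by uniqueness for the direct problem).

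I expect the delicate step to be the claim that \eqref{6.7} actually lies in $L^2(\Omega)$: its coefficients behave like $\lambda_k(\psi_k-\varphi_k e^{-\lambda_k T})$, and while the $\varphi_k$-part is harmless because $\lambda_k e^{-\lambda_k T}$ is bounded, controlling $\sum_k\lambda_k^2\psi_k^2$ is exactly the place where the regularity hypotheses on $\psi$ must be exploited most carefully (this reflects the mildly ill-posed nature of recovering $f$ from the final-time datum, where the inverse map gains fractional derivatives). Everything else is a routine, if lengthy, adaptation of the convergence estimates already carried out in Section \ref{sec3}.
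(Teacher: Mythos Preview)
Your proposal is correct and follows essentially the same route as the paper: Fourier expansion in the eigenbasis, reduction to the scalar ODEs $u_k'+\lambda_k u_k=f_k$, determination of $f_k$ from $u_k(T)=\psi_k$, and then the convergence and uniqueness checks you describe. Your flagged concern about $\sum_k\lambda_k^2\psi_k^2$ is legitimate; the paper handles this step only with the sentence ``Similarly, one can show that the function $f(x)$ defined by \eqref{6.7} belongs to $L^2(\Omega)$'', so you have identified exactly the point the paper glosses over rather than a defect in your own strategy.
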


\begin{proof}

First, we prove the existence of the solution. Let $\{ \phi_{k}(x)\}_{k \in \mathbb{N}}$ 
be eigenfunctions of the fractional Dirichlet-Laplacian spectral problem \eqref{weakformulation}.  We seek  $u(x,t)$ and $f(x)$ in the forms
\begin{eqnarray}\label{6.8}
    u(x,t)=\sum\limits_{k=1}^{+\infty}u_k(t)\phi_{k}(x), \quad f(x)=\sum\limits_{k=1}^{+\infty}f_k \phi_{k}(x). 
\end{eqnarray}

Substituting \eqref{6.8} into \eqref{6.4} and taking $v$ as $\phi_k(x)$, we get the following equation for the functions $u_k(t)$ 
and $f_k$:
$$
u_k^{\prime}(t)+\lambda_{k}u_k(t)=f_k, \quad k\in \mathbb{N}.
$$
It has the solution
\begin{eqnarray}\label{6.9}
u_k(t)=\frac{f_k}{\lambda_{k}}+C_ke^{-\lambda_{k}t},  \quad k\in \mathbb{N},
\end{eqnarray}
where the coefficient $C_k$ and $f_k$ are unknown. 

Using conditions \eqref{6.5} and \eqref{6.4}, we find these coefficients. Let $\varphi_k$ and $\psi_k$ be the coefficients of the expansions of the functions $\varphi(x)$ and $\psi(x)$, i.e.
$$\varphi_k=(\varphi, \phi_{k})_{L^2(\Omega)}, \quad \psi_k=(\psi, 
\phi_{k})_{L^2(\Omega)}.$$
Then, we have 
$$u_k(0)=\varphi_k, \quad  u_k(T)=\psi_k.$$
Using these conditions, we find $C_k$ and $f_k$ as follows 
$$C_k=\frac{\varphi_k-\psi_k}{1-e^{-\lambda_{k}T}},  \quad f_k=\lambda_{k}\varphi_k-\lambda_{k}C_k.$$

Substituting obtained expressions of $u_k(t)$ and $f_k$ into \eqref{6.8}, we obtain \eqref{6.6} and \eqref{6.7}.

First, we show that the function $u(x, t)$ defined by \eqref{6.6} belongs to the class $C\left([0, T]; L^2(\Omega)\right)$ under the assumptions of Theorem \ref{Theorem6.3}. Since $\left\{\phi_k(x)\right\}_{k=1}^{+\infty}$ is an orthonormal system in $L^2(\Omega)$, we have
$$\|u(x, t)\|^2_{L^2(\Omega)} = \sum_{k=1}^{+\infty}\left(\varphi_k+\frac{\varphi_k-\psi_k}{1-e^{-\lambda_kT}} \left(e^{-\lambda_k t}-1\right)\right)^2.$$
Hence, applying the inequality $(a+b)^2 \leqslant 2 a^2+2 b^2$, we obtain
$$
\|u(x, t)\|^2_{L^2(\Omega)} \leqslant 2 \sum_{k=1}^{+\infty} \varphi_k^2+2 \sum_{k=1}^{+\infty}\left(\varphi_k-\psi_k\right)^2 \left(\frac{e^{-\lambda_k t}-1}{1-e^{-\lambda_k T}}\right)^2 \text {. }
$$
Using  $\left|\frac{e^{-\lambda_k t}-1}{1-e^{-\lambda_k T}}\right| \leqslant 1$ and $(a+b)^2\leq 2a^2+b^2$, it yields that
$$
\begin{aligned}
\|u(x, t)\|^2_{L^2(\Omega)} & \leqslant 2 \sum_{k=1}^{+\infty} \varphi_k^2+2 \sum_{k=1}^{+\infty}\left(\varphi_k-\psi_k\right)^2 \\
& \leqslant 6 \sum_{k=1}^{+\infty} \varphi_k^2+4 \sum_{k=1}^{+\infty} \psi_k^2.
\end{aligned}
$$
Hence, using Bessel's inequality, we obtain $$\max _{t \in[0, T]}\|u(x, t)\|^2_{L^2(\Omega)} \leqslant 6\|\varphi(x)\|_{L^2(\Omega)}+4\|\psi(x)\|_{L^2(\Omega)}. $$

Since $\mathbb{H}_0^s(\Omega)\subset L^2(\Omega) $ from the assumptions of Theorem \ref{Theorem6.3} it follows that  $\varphi,$ $\psi \in L^2(\Omega)$. Then from the last it follows that $u \in C\left([0, T]; L^2(\Omega)\right)$.

Now, we prove that the function $u(x,t)$ defined by \eqref{6.6} belongs to the class $C\left((0,T] ; \mathbb{H}_0^s(\Omega)\right)$. Rewrite \eqref{6.6} in the form 
\begin{equation}\label{6.10}
u(x,t)=\sum_{k=1}^{+\infty} \sqrt{\lambda_k}\left(\varphi_k+\frac{\varphi_k-\psi_k}{1-e^{-\lambda_kT}} \left(e^{-\lambda_k t}-1\right)\right) \frac{\phi_k(x)}{\sqrt{\lambda_k}}.  
\end{equation}
Since $\left\{{\phi_k(x)}/{\sqrt{\lambda_k}}\right\}_{k\in \mathbb{N}}$ is an orthonormal system in $\mathbb{H}_0^s(\Omega)$ \cite{25}, we have
$$
\|u(x, t)\|^2_{ \mathbb{H}_0^s(\Omega)} = \sum_{k=1}^{+\infty} \lambda_k \left(\varphi_k+\frac{\varphi_k-\psi_k}{1-e^{-\lambda_k T}} \left(e^{-\lambda_k t}-1\right)\right)^2.
$$
Hence, applying  $(a+b)^2 \leqslant 2 a^2+2 b^2$ and  $\left|\frac{e^{-\lambda_k t}-1}{1-e^{-\lambda k T}}\right|^2 \leqslant 1$, we obtain
$$
\begin{aligned}
\|u(x, t)\|_{\mathbb{H}_0^s(\Omega)} & \leqslant 2 \sum_{k=1}^{+\infty} \lambda_k \varphi_k^2+2 \sum_{k=1}^{+\infty}\lambda_k\left(\varphi_k-\psi_k\right)^2 \\
& \leqslant 6 \sum_{k=1}^{+\infty} \lambda_k \varphi_k^2+4 \sum_{k=1}^{+\infty} \lambda_k \psi_k^2 .
\end{aligned}
$$

Since $\sqrt{\lambda}_k \varphi_k=\langle\varphi, \frac{\phi_k}{\sqrt{\lambda}_k}\rangle_{\mathbb{H}_0^s(\Omega)}$ and $ \sqrt{\lambda_k} \psi_k=\langle \psi, \frac{\phi_k}{\sqrt{\lambda_k}}\rangle _{\mathbb{H}_0^s(\Omega)}$, by applying Bessel's inequality in the latter inequality we have 
$$\max _{t \in(0, T]}\|u(x, t)\|_{\mathbb{H} _0^s(\Omega)} \leqslant 6\|\varphi\|_{\mathbb{ H}_0^s(\Omega)}+4\|\psi\|_{\mathbb{H}_0^s(\Omega)}.$$

Thus, based on the assumptions of Theorem \ref{Theorem6.3}, we conclude that $u(x,t)\in C((0,T];\mathbb{H}_0^s(\Omega))$. 

Next, we show that the series defined by \eqref{6.6} converges in $L^2(\Omega)$ uniformly for $0<t_0<T$.  By differentiating \eqref{6.6} with respect to $t$, we obtain 
\begin{equation}\label{6.11}
u_t(x, t)=\sum_{k=1}^{+\infty}  \frac{\lambda_k e^{-\lambda_kt}(\psi_k-\varphi_k)}{1-e^{-\lambda_kT}}  \phi_k(x).  
\end{equation}
Hence, with $\left\|\phi_k(x)\right\|_{L^2(\Omega)}=1$,  it implies
$$
\left\|{u_t}(x, t)\right\|^2_{L^2(\Omega)} = \sum_{k=1}^{+\infty}\left(\frac{\lambda_k t e^{-\lambda_k t}}{t}\right)^2 \left(\frac{\psi_k-\varphi_k}{1-e^{-\lambda_k T}}\right)^2.
$$
From \eqref{eq3.18} it follows that  $\displaystyle{\lambda_k t e^{-\lambda_k t} \leqslant \frac{1}{t  e}\leq \frac{1}{t_0  e}}$. Taking this into account  and using the fact $$\frac{1}{1-e^{-\lambda_k T}}<\frac{1}{1-e^{-\lambda_1 T}},$$
we obtain 
\begin{equation}\label{6.12}
\begin{aligned}
\max_{t\in (0,T]}\left\|u_t(x, t)\right\|^2_{L^2(\Omega)} & \leqslant \frac{1}{t_0^4 e^2} \frac{1}{\left(1-e^{\lambda_1T}\right)^2}  \sum_{k=1}^{+\infty} \left(\psi_k-\varphi_k\right)^2 \\
& \leqslant C_8 \left\{\sum_{k=1}^{+\infty}  \psi_k^2+  \sum_{k=1}^{+\infty} \varphi_k^2 \right\}\\
& \leqslant C_8 \left(\|\psi\|_{L^2(\Omega)}+\|\varphi\|_{L^2(\Omega)}\right),
\end{aligned}    
\end{equation}
where $C= \frac{2}{t_0^4 e^2} \frac{1}{\left(1-e^{-\lambda_1 T}\right)^2}$. 

From \eqref{6.12}, it follows that $u(x,t)\in C^1((0,T];L^2(\Omega))$. 

Similarly, one can show that the function $f(x)$ defined by \eqref{6.7} belongs to $L^2(\Omega)$ under the conditions of Theorem \ref{Theorem6.3}.

The existence of the solution of Problem \ref{problem6.2} has been proved.

Now, we show the uniqueness of the solution to Problem \ref{problem6.2}. Assume that the solution of Problem \ref{problem6.2} is not unique. That is, suppose that there exists two solutions $\left\{u_1(x, t), f_1(x)\right\}$ and $\left\{u_2(x, t), f_2(x)\right\}$ of Problem \ref{problem6.2}. Let us introduce functions
$$
u(x,t)=u_1(x, t)-u_2(x, t)
$$
and
$$
f(x)=f_1(x)-f_2(x).
$$

Then the functions $u(x, t)$ and $f(x)$ satisfy \eqref{6.4} and homogeneous conditions 
\begin{equation}\label{homogeneouscondition}
    u(x,0)=0, \quad x\in \mathbb{R}^n\backslash \Omega;  \quad u(x,T)=0, \quad  x \in \mathbb{R}^n\backslash\Omega.
\end{equation}
 We expand the functions $u(x,t)$ and $f(x)$
by the system of eigenfunctions of the spectral problem \eqref{weakformulation} as in \eqref{6.8}. 

Substituting \eqref{6.8} into \eqref{6.4}, we have 
$$\frac{ d u_{k}(t)}{dt}
+\lambda_{k} u_{k}(t)=f_{k}, \quad k\in \mathbb{N}.
$$

The general solution of this equation has the form
$$
u_k(t)=C_k e^{\lambda_{k}t}-\frac{f_k}{\lambda_{k}}, \quad k\in \mathbb{N}.
$$
where $C_k$ and $f_k$ are unknown constants. 

By the homogeneous conditions \eqref{homogeneouscondition},  we have 
$$u_k(0)=u_k(T)=0.$$

Using these we seek the unknown constants 
$C_k$ and $f_k$. First, we find $C_k$:
$$u_k(0)=C_k-\frac{f_k}{\lambda_{k}}=0, \quad \text{then} \quad C_k=\frac{f_k}{\lambda_{k}}. $$

From
$$u_k(T)=C_ke^{\lambda_{k}T}-\frac{f_k}{\lambda_{k}}=0, $$
we obtain
$$\frac{f_k}{\lambda_{k}}(e^{\lambda_{k}T}-1)=0.$$

Thus, $f_{k} \equiv 0,$ and $u_{k}(t) \equiv 0$.  Therefore , the completeness of the system $\{ \phi_{k}(x)\}_{k=1}^{+\infty}$ in $L^2(\Omega)$ follows that $f(x) \equiv 0$ and $u(x, t) \equiv 0$. It implies the uniqueness of the solution of the Problem \ref{problem6.2}.

\end{proof}

\section*{Acknowledgements}
This research was funded by the Committee of Science of the Ministry of Science and Higher Education of Kazakhstan (Grant number BR21882172). This work was also supported by the NU program 20122022CRP1601. A. Mamanazarov would like to thank Nazarbayev  University for the support during his research visit. No new data was collected or generated during the research.


\begin{thebibliography}{ABGM15}
\bibitem[1]{24}
R. Bañuelos, T. Kulczycki, \textit{Trace estimates for stable processes},  Probab. Theory Related Fields. \textbf{142}(2008), 313–338,  DOI: https://doi.org/10.1007/s00440-007-0106-x.

\bibitem[2]{23}
B. Barrios, I. Peral, F. Soria, et al, \textit{A Widder’s type theorem for the heat equation with nonlocal diffusion,} Arch. Ration. Mech. Anal.   \textbf{213}(2014), 629-650, DOI: https://doi.org/10.1007/s00205-014-0733-1.

\bibitem[3]{12} R. M. Blumenthal, R. K. Getoor, \textit{The asymptotic distribution of the eigenvalues for a class
of Markov operators,}   Pacific J. Math. \textbf{9}(1959), no. 2,  399–408.


\bibitem[4]{5}  J. R. Cannon,   S.P. Esteva,   \textit{An inverse problem for the heat equation}, Inverse Problems. \textbf{2}(1986), no. 4,  2395, DOI: https://iopscience.iop.org/article/10.1088/0266-5611/2/4/007.

\bibitem[5]{6} J. R. Cannon, Y. Lin,  S. Wang,   \textit{Determination of a control parameter in a parabolic partial differential equation,}  J. Aust. Math. Soc. Serie B. \textbf{33}(1991), 149–163, DOI:
https://doi.org/10.1017/S0334270000006962.

\bibitem[6]{17}  Z.Q. Chen,  R. Song,  \textit{Two-sided eigenvalue estimates for subordinate processes in domains,} J. Funct. Anal. \textbf{226}(2005), no. 1, 90-113, DOI: https://doi.org/10.1016/j.jfa.2005.05.004.

\bibitem[7]{13} M. Daoud, E. H. Laamri,  \textit{Fractional Laplacians: A short survey,} Discrete Contin. Dyn. Syst. Ser. S. \textbf{15}(2022), no.1, 95-116, DOI: https://doi.org/10.3934/dcdss.2021027.


\bibitem[8]{18}  R. L. Frank,   \textit{Eigenvalue bounds for the fractional Laplacian: A review,} ArXiv: 1603.09736, 2017.
DOI: https://doi.org/10.48550/arXiv.1603.09736.

\bibitem[9]{26} T. Ghosh, A. Rüland, M. Salo, G. Uhlmann,
\textit{Uniqueness and reconstruction for the fractional Calderón problem with a single measurement},
J. Funct. Anal.,
\textbf{279}(2020), no. 1, 108505, DOI: https://doi.org/10.1016/j.jfa.2020.108505. 

\bibitem[10]{19}
Q. Y. Guan, Z. M. Ma,   \textit{The reflected $\alpha$-symmetric stable processes and regional fractional Laplacian,} Probab. Theory Related Fields. \textbf{134}(2006), 649-694, DOI: https://doi.org/10.1007/s00440-005-0438-3.


\bibitem[11]{4}    A. Hazanee,  D. Lesnic,  M. I. Ismailov,  N. B. Kerimov,  \textit{Inverse time-dependent source problems for the heat equation with nonlocal boundary conditions,} 
Appl. Math. Comput.
\textbf{346}(2019),
800-815, DOI: https://doi.org/10.1016/j.amc.2018.10.059. 

\bibitem[12]{30} T. Helin, M. Lassas, L. Ylinen, Z. Zhang,
\textit{Inverse problems for heat equation and space–time fractional diffusion equation with one measurement},
J. Differential Equations,
\textbf{269}(2020), Issue 9, 7498-7528,
DOI: https://doi.org/10.1016/j.jde.2020.05.022.

\bibitem[13]{10} M. Ismailov,  T. Ozawa, D.  Suragan, \textit{Inverse problems of identifying the time-dependent source coefficient for subelliptic heat equations,}  Inverse Probl. Imaging. 2023, DOI: https://doi.org/10.3934/ipi.2023056. 


\bibitem[14]{1} M. I. Ivanchov, \textit{Inverse problems for equations of parabolic type}, VNTL Publications, Lviv,  2003.

\bibitem[15]{11}  M. Karazym, T. Ozawa,  D. Suragan, \textit{Multidimensional inverse Cauchy problems for evolution equations,} Inverse Problems in Science and Engineering, \textbf{28}(2020), no. 11, 1582–1590, DOI:
https://doi.org/10.1080/17415977.2020.1739034.

\bibitem[16]{8}  N. Kerimov, M. Ismailov, \textit{An inverse coefficient problem for the heat equation in the case of nonlocal boundary conditions,}  J. Math. Anal. Appl.
\textbf{396}(2012), no. 2, 546-554, DOI: https://doi.org/10.1016/j.jmaa.2012.06.046.

\bibitem[17]{28}
 P. Z. Kow et al, \textit{An inverse problem for semilinear equations
involving the fractional Laplacian,} Inverse Problems \textbf{39}(2023), 095006. 
DOI: https://doi.org/10.1088/1361-6420/ace9f4.

\bibitem[18]{21} T. Kulczycki,  M. Kwaśnicki, J. Małecki, A. A. Stos, \textit{Spectral properties of the Cauchy process on half-line and interval,} Proc. Lond. Math. Soc. \textbf{101}(2010), no. 2,  589-622, DOI: https://doi.org/10.1112/plms/pdq010.



\bibitem[19]{14}
M. Kwaśnicki, \textit{Ten equivalent definitions of the fractional Laplace operator,} Fract. Calc. Appl. Anal. \textbf{20}(2017), no. 1, 7-51, DOI: https://doi.org/10.1515/fca-2017-0002.

\bibitem[20]{29}
L. Li,  \textit{On inverse problems for uncoupled space-time fractional operators
involving time-dependent coefficients,} Inverse Problems and Imaging, \textbf{17}(2023), no.4,  pp. 890-906. DOI: 10.3934/ipi.2023008.

\bibitem[21]{9}
Z. Li, M. Yamamoto, \textit{Inverse problems of determining coefficients of the fractional partial differential equations,} Fractional Differential Equations, Berlin, Boston: De Gruyter, \textbf{2}(2019) 443-464, DOI:https://doi.org/10.1515/9783110571660-020.


\bibitem[22]{3} I. Malyshev,    \textit{An Inverse Source Problem for Heat Equation,} J. Math. Anal. Appl. \textbf{142}(1989), 206-218. 

\bibitem[23]{25}  G. Molica Bisci, V. D. Radulescu, R. Servadei,  \textit{Variational Methods for Nonlocal Fractional Problems}, Cambridge University Press, 2016.

\bibitem[24]{20}
E. Di Nezza, G. Palatucci, E. E. Valdinoci, \textit{Hitchhiker’s guide to the fractional Sobolev
spaces,} Bulletin des Sciences Mathématiques, \textbf{136}(2012), no. 5, 521-573, DOI: https://doi.org/10.1016/j.bulsci.2011.12.004.


\bibitem[25]{2} A. I. Prilepko, D. G. Orlovski, I. A. Vasin,  \textit{Methods for solving inverse problems in mathematical physics}, Marcel Dekker, New York, 2000.

\bibitem[26]{27} H. Quan, G. Uhlmann, 
\textit{The Calderón problem for the fractional Dirac operator}, to appear in Math. Research Letters, ArXiv: arXiv:2110.09265, 2021. DOI: https://doi.org/10.48550/arXiv.2204.00965.


\bibitem[27]{7}
M. Ruzhansky, N. Tokmagambetov, B. Torebek, \textit{Inverse source problems for positive operators. I: Hypoelliptic diffusion and subdiffusion equations,} J. Inverse Ill-Posed Probl. \textbf{27}(2019), no. 6, 891-911, DOI: https://doi.org/10.1515/jiip-2019-0031.

\bibitem[28]{15}
E. Valdinoci, \textit{From the long jump random walk to the fractional Laplacian,} Bol. Soc. Esp. Mat. Apl. \textbf{49} (2009), 33–44.

\bibitem[29]{16}
J. L. Vázquez,   \textit{The Mathematical Theories of Diffusion: Nonlinear and Fractional Diffusion. In: Bonforte, M., Grillo, G. (eds) Nonlocal and Nonlinear Diffusions and Interactions: New Methods and Directions,} Lecture Notes in Mathematics,  Springer, Cham. \textbf{2186}(2019).























\end{thebibliography}
\end{document}